\renewcommand{\P}{\mathbb{P}}
\newcommand{\E}{\mathbb{E}}				
\newcommand{\Var}{\mathrm{Var}}			
\newcommand{\Cov}{\mathrm{Cov}}			
\newcommand{\R}{\mathbb{R}}
\newcommand{\N}{\mathbb{N}}
\newcommand{\numberthis}{\addtocounter{equation}{1}\tag{\theequation}}
\newcommand{\diff}{\,\mathrm{d}}			
\newcommand{\eps}{\varepsilon}
\newcommand{\lH}{\underline{H}}
\newcommand{\uH}{\overline{H}}
\newcounter{saveeqna}
\newcommand{\alpheqna}{\setcounter{saveeqna}{\value{equation}}%
\setcounter{equation}{0}%
\renewcommand{\theequation}{\mbox{A\arabic{equation}}}}
\newcommand{\reseteqna}{\setcounter{equation}{\value{saveeqna}}%
\renewcommand{\theequation}{\arabic{equation}}}
\newcommand{\numChanges}{\mathrm{numChanges}}
\newcommand{\revision}[1]{{#1}}
\journalname{Mathematical Geoscience}
\def\citeayip#1{\citeauthor{#1} (\citeyear{#1})}
\begin{document}

\title{Quantification of fracture roughness by change probabilities and Hurst exponents
}

\author{Tim Gutjahr        \and
        Sina Hale \and
        Karsten Keller \and
        Philipp Blum \and
        Steffen Winter 
}

\institute{T. Gutjahr (\Letter) \and K. Keller \at
University of Luebeck, Institute of Mathematics, Ratzeburger Allee 160, 23562 Luebeck\\
              Tel.: +49 451 3101 6050\\
              \email{gutjahr@math.uni-luebeck.de} 
        \and
           S. Hale \and P. Blum \at
              Karlsruhe Institute of Technology (KIT), Institute of Applied Geosciences, Kaiserstr. 12, 76131 Karlsruhe, Germany
        \and
        S. Winter \at
             Karlsruhe Institute of Technology (KIT), Institute of Stochastics, Englerstr. 2, 76131 Karls\-ruhe, Germany
              }

\date{} 

\maketitle

\begin{abstract}
The objective of the current study is to utilize an innovative method called ‘change probabilities' for describing fracture roughness.
In order to detect and visualize anisotropy of rock joint surfaces, the roughness of one-dimen\-sional profiles taken in different directions is quantified. The central quantifiers, `change probabilities', are based on counting monotone changes in discretizations of a profile. These probabilities, which are usually varying with the scale, can be reinterpreted as scale-dependent Hurst exponents. For a large class of Gaussian stochastic processes change probabilities are shown to be directly related to the classical Hurst exponent, which generalizes a relationship known for fractional Brownian motion. While being related to this classical roughness measure, the proposed method is more generally applicable, increasing therefore the flexibility of modeling and investigating surface profiles. In particular, it allows a quick and efficient visualization and detection of roughness anisotropy and scale dependence of roughness.  

\keywords{fracture roughness, anisotropy, change probability, fractional Brownian motion, Hurst exponent, scale dependence}

\subclass{60G18 \and 60G15}
\end{abstract}

\section*{Declarations}
{\bf Funding}\\
We gratefully acknowledge funding by the KIT center MathSEE, seed project `\emph{Surface roughness and anisotropy of natural rock joints}'. Financial support by the Federal Ministry of Education and Research (03G0871D) as part of the project ResKin (\emph{Reaction kinetics in reservoir rocks}) is also gratefully acknowledged.\\[2mm]
\noindent {\bf Conflicts of interests}\\
The authors declare that they have no conflict of interest.\\[2mm]
\noindent {\bf Availability of data and material}\\
Not applicable.\\[2mm]
\noindent {\bf Code availability}\\
The produced Matlab-Code is freely available, see \citeayip{Gutjahr-matlab-code}.

\section{Introduction}

Fracture roughness is an omnipresent and important property of natural rock joints. It varies depending on rock type (e.g.,\ \cite{morgan_cracking_2013}) or formation mechanism (e.g.,\ \cite{xie_multifractal_1999}; \cite{vogler_comparison_2017}) and essentially controls the mechanical and hydraulic properties of discontinuities (\cite{magsipoc_2d_2020}). Contact areas between two rough fracture surfaces influence fracture strength and deformation under normal or shear loadings (\cite{bandis_fundamentals_1983}; \cite{tsang_dependence_1983}; \cite{power_topography_1997}; \cite{jiang_estimating_2006}; \cite{brodsky_constraints_2016}). The effective hydraulic aperture and related parameters such as permeability and flow, however, are determined by the shape of the void space within the fracture, which is dependent both on the roughness of the individual fracture surfaces and on the degree of mismatch between them (\cite{brown_fluid_1987}; \cite{roux_physical_1993}; \cite{zimmerman_hydraulic_1996}; \cite{boutt_trapping_2006}; \cite{zhao_effects_2014}). Furthermore, a quantitative description of fracture roughness is a prerequisite for creating synthetic input data sets for numerical fluid flow simulations, which ideally match relevant surface characteristics of natural fractures (e.g.,\ \cite{vogler_comparison_2017}; \cite{magsipoc_2d_2020}). Recently, researchers often use synthetic fractures with self-affine properties (e.g.,\ \cite{kottwitz_hydraulic_2020}; \cite{dong_quantitative_2020}; \cite{seybold_flow_2020}; \cite{yu_effects_2020}). For example, \citeayip{liu_three-dimensional_2020} simulate anisotropic flow during shearing by using a double-rough-walled fracture model, while others examine the control of roughness on fracture permeability or breakthrough curves for conservative solutes (\cite{zambrano_analysis_2019}; \cite{dou_multiscale_2019}). Fracture roughness is therefore linked to rock mechanics, hydrogeology, and also to various geoscientific fields of application, for example geothermal energy, reservoir engineering or geologic disposal of radioactive waste (\cite{li_stochastic_2019}; \cite{stigsson_novel_2019}).

Numerous parameters and methods have 
been proposed to characterize fracture roughness qualitatively and quantitatively (\cite{magsipoc_2d_2020}) such as the well-known joint roughness coefficient, JRC (\cite{barton_review_1973}; \cite{barton_shear_1977}) or various statistical roughness parameters such as $Z_2$ (\cite{myers_characterization_1962}) and $R_p$ (\cite{el-soudani_profilometric_1978}). Today, fractal methods receive increasing attention for the quantitative description of fracture roughness and are applied to various fracture types from nanometer scale (e.g.,\ \cite{siman-tov_nanograins_2013}) to kilometer scale, including studies on earthquake rupture traces (\cite{candela2012}) or coastlines (\cite{renard_constant_2013}). The application of fractal methods is based on the general assumption that fracture surfaces can approximately be described as self-affine fractals (e.g.,\ \cite{mandelbrot_self-affine_1985}; \cite{brown_fluid_1987}; \cite{thompson_effect_1991}; \cite{power_euclidean_1991}; \cite{schmittbuhl_field_1993}; \cite{odling_natural_1994}; \cite{lee_structural_1996}).

Fractal theory allows for detailed analysis of fracture surface morphology and enables upscaling of roughness and related fracture properties from experimental to field scale. In this context, the Hurst exponent (or the related parameter fractal dimension) represents a measure of roughness and characterizes its scaling behavior (\cite{odling_natural_1994}; \cite{issa_fractal_2003}). The Hurst exponent can also be used to investigate the roughness anisotropy of shear fractures or exposed fault surfaces.
Generally, different values are obtained depending on the profile orientation with respect to the original direction of movement of the surface. While \citeayip{candela2012} and \citeayip{corradetti_evaluating_2017} determined Hurst exponents for natural fault surfaces by using two distinct profile directions oriented parallel and perpendicular to slip, \citeayip{xie_multifractal_1999} considered different spatial directions and positions on the fracture surface for calculating fractal dimensions.

In many studies, minimal values of the Hurst exponent were found parallel to the slip direction and increasing values were observed with progressing angular deviation, which implies a higher roughness along the slip direction (e.g.,\ \cite{renard_high_2006}; \cite{candela2012}; \cite{corradetti_evaluating_2017}). In contrast, \citeayip{xie_multifractal_1999} and \citeayip{sagy_evolution_2007} observed that the studied fault surfaces were smoother in slip-parallel direction. In general, these contrasting observations with regard to the directionality of surface roughness may be caused by the diversity of roughness measures used and their interpretation in spite of model violations. In addition, \citeayip{sagy_evolution_2007} showed that natural small-slip and large-slip fault surfaces are each characterized by different geometric features and can therefore differ greatly in roughness along the slip-direction.

For determining fractal parameters various methods are applied: Fourier power spectrum (e.g.,\ \cite{sagy_evolution_2007}; \cite{bistacci_fault_2011}; \cite{candela2012}; \cite{renard_constant_2013}; \cite{corradetti_evaluating_2017}; \cite{corradetti_impact_2020}), variogram analysis (e.g.,\ \cite{huang_applicability_1992}; \cite{mcclean_apparent_2002}), structure function (e.g.,\ \cite{poon_surface_1992}; \cite{odling_natural_1994}) or box counting method (e.g.,\ \cite{malinverno_simple_1990}).  Regardless of the methodology used,  it should be noted that a correct interpretation of fractal parameters requires self-affinity, or at least some kind of scale-invariance.

In this study, an innovative roughness measure is proposed which is linked to the order relation of value triples measured along directional fracture profiles. The method is based on the identification of so-called ‘change patterns’ and subsequent determination of their relative frequency. This ‘change probability’ serves as simple and generally scale-dependent roughness measure.  Under the assumption of some kind of self-affinity, the method offers a simple way for determining the Hurst exponent. In the classical setting of fractional Brownian motion, which is widely used in modeling rock profiles (e.g.,\ \cite{brown_fluid_1987}; \cite{huang_applicability_1992}; \cite{odling_natural_1994}; \cite{rock}), the change probability is directly related to the Hurst exponent, hence also to the autocorrelation function (\cite{coeurjolly:hal-00383120}). Even beyond this setting, the method offers the possibility of visualizing, detecting and specifying morphological anisotropy of shear fractures or fault planes.

The term `change probability' has first been used in \citeayip{doi:10.1137/S0040585X97984991} in the given context. The idea, however, is far from being new. When considering increments between values instead of the values themselves, changes turn into 'zero-crossings'. Given a sequence of data points, a so-called 'zero-crossing' occurs if two consecutive data points differ in their sign. Indicators and probabilities of zero-crossings were applied in various fields such as signal analysis (e.g.,\ \cite{ChangPielEssigmann51}, \cite{EwingTaylor69}). Further background on zero-crossings can be found in \citeayip{doi:10.1137/S0040585X97984991}.

It is important to note that change probabilities are invariant under monotone transformations of a stochastic process. This invariance is interesting from a theoretical viewpoint, because it widens the class of mathematical models to which the proposed methods apply. On the other hand, this invariance is useful from a practical point of view. For example, differently calibrated measuring devices may map the same height information of a fracture surface to different values in the resulting data set (e.g.,\ gray level image or surface profile). A device might also be more sensitive to small differences than another one, so that the actual heights are mapped onto a larger range of values in the resulting image. This makes it rather difficult to compare data. However, since different mappings typically do not change the relative order of the values, change probabilities are not affected by such effects.

The presented approach is simple and economical from a computational point of view, as it is essentially based on comparing values at different locations. No heavy computations as in the estimation of the autocorrelation function or the Fourier spectrum are therefore needed. Furthermore, the approach is in line with a relatively new general trend in time series analysis and signal processing aiming to avoid using the exact metric values and concentrate instead on the ordinal structure of the data (e.g.,\ \cite{zbMATH05702310}, \cite{ZaninEtAl2012}, \cite{AmigoKellerUnakafova2014}). 

The present study is structured as follows: Section~\ref{sec:methods} introduces the concept of change probabilities as the theoretical base for quantifying directional fracture roughness. Furthermore, the relation between change probabilities and the Hurst exponent is discussed, firstly in the case of fractional Brownian motion, and secondly for some more general class of Gaussian processes. In Sect.~\ref{sec:algorithm}, a practical implementation of the proposed methods is described. In Sect.~\ref{sec:experiments}, the developed algorithm for roughness analysis is applied to natural fracture surfaces. On this basis, the novel method and its performance is compared to other methods. In Sect.~\ref{sec:discussion}, the results are discussed. Finally, concluding remarks are provided in Sect.~\ref{sec:conclusion}.

\section{Methods} \label{sec:methods}

To provide a first insight into the concept of ‘change probability’, it is assumed for simplicity that a profile curve – obtained as a linear section of some surface – is described by a real function on some interval $[0,T]$. For some positive $\tau$ being small relative to $T$ and some $t$ such that $0 \leq t$ and $t+2\tau \leq T$, consider the consecutive values $x(t)$, $x(t+\tau)$ and $x(t+2\tau)$. There are six possible ways to order these three values, as depicted in Fig.~\ref{fig:ordinalPatterns}. The idea behind the following method is that values on a smooth curve are more likely to be ordered in such a way that the value in the middle is the second largest out of the three, while for values on a rough curve it is more likely that the value in the middle is the smallest or largest. Out of the six possible orders of the three values, there are four for which the middle value is the largest or smallest one (Fig.~\ref{fig:ordinalPatterns}). These four order patterns will be called ’change patterns’ because in each of them there is a change from an increasing to a decreasing direction or vice versa.

\begin{figure}[h]
\newcommand{\drawpattern}[2]{
\foreach \j [count=\i] in #1{
\coordinate (c-\i) at (\i+#2-1,\j);
\fill (c-\i) circle (0.15);
\ifnum \i>1
	\pgfmathtruncatemacro{\k}{\i-1}
	\draw (c-\k) -- (c-\i);
\fi
}
}
\centering
\begin{tikzpicture}[scale = 0.6]
\drawpattern{{1,3,2}}{0}
\drawpattern{{2,3,1}}{3}
\drawpattern{{2,1,3}}{6}
\drawpattern{{3,1,2}}{9}
\drawpattern{{1,2,3}}{12}
\drawpattern{{3,2,1}}{15}
\draw[thick, decoration={brace,mirror,raise=0cm},decorate] (-0.5,0) -- (11.5,0)
node [pos=0.5,anchor=north,yshift=-0.1cm] {change patterns};
\end{tikzpicture}
\caption{All possible orderings of values at three consecutive points on a profile}
\label{fig:ordinalPatterns}
\end{figure}

Although the overall aim is the analysis of rock joint surfaces, this section is devoted entirely to the analysis of one-dimensional profiles. Later in Sect.~\ref{sec:algorithm}, the introduced method is used to study the anisotropy of surfaces. Throughout, one-dimensional profiles of rock surfaces are modeled as realizations of a stochastic process $(X_t)_{t\in[0,\infty[}$ with values in ${\mathbb R}$ defined on some probability space with measure $\P$, where the value $X_t$ represents the height of the profile above some reference line.  In other words, it is assumed that such profiles can be represented as graphs of functions, which is debatable but seems to be common practice. It is to some extent justified by the fact that rock surfaces are usually measured using methods such as laser profilometry or white light interferometry which produce such functional data and ignore the true three-dimensional structure of the surface.
It is convenient to assume that the location variable $t\in[0,\infty[$ has a continuous range. In practice, measurements will only be available at finitely many locations $t$ and then it is not a big restriction to set the range to $t\in\{0,1,2,\ldots, n\}$ for some $n\in {\mathbb N}$ as one has the freedom to choose the scale. Details on how to get values $x_0,x_1,\ldots , x_n$ for those $t$ from the rock surface are provided in Sect. \ref{sec:algorithm}.

\subsection{Roughness by change probabilities}

For any stochastic process, a concept of
roughness is provided by so-called 'change probabilities'.
\begin{definition}  Given a stochastic process $(X_t)_{t\in[0,\infty[}$ on some probability space with measure $\P$
and some $\tau \in\ ]0,\infty[$, the {\em change probability of delay} $\tau$ is defined by
\begin{equation*}
p(\tau):= \P(X_0<X_\tau,X_\tau \geq X_{2\tau})+\P(X_0\geq X_\tau,X_{\tau}<X_{2\tau}).
\end{equation*}
\end{definition}
$p(\tau)$ describes how likely it is to observe one of the four change patterns from Fig.~\ref{fig:ordinalPatterns} at location $0$. Although $p(\tau)$ is defined for any process, in order to be a useful quantity for the statistics of the process (in particular to become estimable from change counts), it is reasonable to require the process to have some minimal stationarity, namely to require that
\begin{align}\label{eq:stat}
\P(X_t<X_{t+\tau},X_{t+\tau}\geq X_{t+2\tau})+\P(X_t\geq X_{t+\tau},&X_{t+\tau}<X_{t+2\tau})\notag\\ & \mbox{ does not depend on }t
\end{align}
at least for the locations $t$ and delays $\tau$ of interest.
This \emph{stationarity of pattern occurrences} is a rather weak form of stationarity. It is for instance implied by the common assumption of stationary increments and, in case of a Gaussian process, in particular by the conditions \eqref{a2} and \eqref{a3} imposed later on in this section.
For this and the following, see e.g.~\citeayip{zbMATH05290232}.

If $(X_t)_{t\in[0,\infty[}$ is a Gaussian process with zero mean, then $p(\tau)$ is known to be given by the formula
\begin{align} \label{eq:Gaussian-formula}
p(\tau)=1-\frac{2}{\pi} \arcsin\left(\sqrt{\frac{\Cov(X_\tau-X_0,X_{2\tau}-X_\tau)}{2\sqrt{\Var(X_\tau-X_0)\Var(X_{2\tau}-X_\tau)}}+\frac{1}{2}}\right),
\end{align}
for all $\tau  \in\ ]0,\infty[$.

Change probabilities are proposed here as a measure of roughness. Below it is argued that they are more generally defined and easier to compute than the Hurst exponent that is typically used. In certain situations there is a direct relation between these two roughness measures.
Via their dependence on the delay parameter $\tau$,  addressing different scales,  change probabilities are able to capture more roughness information than the Hurst exponent.

\subsection{Fractional Brownian Motion, change probabilities and the Hurst exponent}\label{sec:fbm}

As already mentioned, fractional Brownian motion (fBm) is widely used as a model for rock profiles, and the main model parameter, the Hurst exponent $H\in\ ]0,1[$, serves as a quantifier of roughness  (see e.g.,~\cite{rock}). Recall that fBm is a Gaussian stochastic process $(X_t)_{t\in [0,\infty [}$ satisfying
\alpheqna
\begin{align}
X_0&=0\quad \P\mbox{-almost surely},\label{a1}\\
\E(X_t) &= 0\quad \mbox{ for all }t\in [0,\infty[,\label{a2}
\end{align}
\reseteqna
and
\begin{equation*}
\Cov(X_s,X_t) =\frac{1}{2}\left (s^{2H}+t^{2H}-\lvert t-s\rvert^{2H}\right )\quad\mbox{ for all }s,t\in [0,\infty[.
\end{equation*}
It has two outstanding properties, the first one being its {\em self-affinity}, meaning that, for any $\alpha>0$, the processes
\begin{equation*}
(X_{\alpha t})_{t\in [0,\infty[}\mbox{ and }(\alpha^H X_{t})_{t\in [0,\infty[} \quad \mbox{ have the same distribution.}
\end{equation*}
This scaling invariance has the consequence that $p(\tau)$ does not depend on $\tau >0$. The second one is that, for any $\tau\in [0,\infty[$,
\alpheqna
\setcounter{equation}{2}
\begin{equation}\label{a3}
\Cov(X_{t+\tau+1}-X_{t+\tau},X_{t+1}-X_t)\quad \mbox{ does not depend on }t\in [0,\infty[.
\end{equation}
Note that the properties \eqref{a2} and \eqref{a3} together mean that the increment process $(X_{t+1}-X_t)_{t\in [0,\infty [}$ of the stochastic process $(X_t)_{t\in [0,\infty[}$ is weakly stationary. In the Gaussian case this implies its stationarity. For fBm, the increment process is also known as {\em fractional Brownian noise}.
\reseteqna

In order to describe the announced relationship between change probabilities and Hurst exponents, consider the function $h:\ ]0,1[\rightarrow ]-\infty,1[$ given by
\begin{equation}\label{eq:h}
h(x) = 1+\log_2(\sin(\pi(1-x)/2))
\end{equation}
for $x\in\ ]0,1[$. The statement characterizing the Hurst exponent in terms of the change probabilities is the following: {\em If $(X_t)_{t\in [0,\infty[}$ is a fBm with Hurst exponent $H\in]0,1[$, then $h(p(\tau)) = H$ for all $\tau \in \ ]0,\infty[$.}
This statement goes essentially back to \citeayip{coeurjolly:hal-00383120}, who formulated it in terms of zero-crossings in the increment process of a fBm.

It is easy to show that for any fBm $p(\tau) \leq 2/3$. Moreover, $h\left (\frac{2}{3}\right )=0$ and $h$ maps $]0,\frac{2}{3}[$ onto $]0,1[$ in a strictly decreasing way. In particular, the higher the change probability is, the lower the Hurst exponent (and thus the rougher the profile). So for the class of fBms the change probability $p(\tau)$, which does not depend on the scale $\tau$, and the Hurst exponent are equivalent quantifications of roughness. In order to treat the more general situation of
scale-dependent roughness, the following definition is introduced.
\begin{definition}  Given a stochastic process $(X_t)_{t\in[0,\infty[}$ on some probability space with measure $\P$
and some $\tau \in\ ]0,\infty[$, the {\em Hurst exponent of delay} $\tau$ is defined by
$H(\tau)=h(p(\tau))$, where $h$ is as in \eqref{eq:h}.
\end{definition}
\revision{Again it is reasonable to assume at least stationarity of pattern occurrences here as introduced in \eqref{eq:stat}.} Basically, the Hurst exponent of delay $\tau$ is nothing more than a reinterpretation of the change probability of delay $\tau$. However, it allows a unified discussion of roughness for a large class of stochastic processes in the established framework and an easier comparison with the usually considered Hurst exponent $H$ (see \eqref{eq:HurstExp}), whenever it is defined.

\subsection{Generalizing results to a wider class of Gaussian processes} \label{sec:asymptoticProperties}

The self-affinity of fBm results in the same change probability $p(\tau)$ (and thus in the same Hurst exponent $H(\tau)$) at all delays $\tau$. This kind of scale-invariance of roughness is a relatively strong property. It is generally not completely compatible with real world surface profile data, but interesting from a modeling viewpoint. In order to gain some more flexibility on the modeling side, Gaussian processes with asymptotically stabilizing roughness will be considered and investigated, for which the roughness (in the sense of change probabilities) is allowed to vary with the scale. The relation to the Hurst exponent, which ultimately is an asymptotic concept capturing the long-range behavior of a process, will be preserved in form of an asymptotic equality. This relation is based on the following fundamental characterization of the Hurst exponent of delay $\tau$ (and thus of the change probability $p(\tau)$) in terms of variance scaling.

\begin{proposition}\label{propchar} 
Let $(X_t)_{t\in [0,\infty [}$ be a Gaussian stochastic process satisfying \eqref{a1} and \eqref{a2}. If $0<\Var(X_\tau)=\Var(X_{2\tau}-X_\tau)$
for some $\tau >0$, then
\begin{align*}
H(\tau)=\frac{1}{2} \log_2\left(\frac{\Var(X_{2\tau})}{\Var(X_\tau)}\right).
\end{align*}
\end{proposition}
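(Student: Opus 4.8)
The plan is to reduce both sides of the claimed identity to the same elementary expression in a single correlation parameter. Since the process is Gaussian with zero mean by \eqref{a2}, the formula \eqref{eq:Gaussian-formula} for $p(\tau)$ is available. First I would exploit \eqref{a1}: as $X_0=0$ almost surely, the increment $X_\tau-X_0$ coincides with $X_\tau$, so in \eqref{eq:Gaussian-formula} one may replace $\Var(X_\tau-X_0)$ by $\Var(X_\tau)$ and $\Cov(X_\tau-X_0,X_{2\tau}-X_\tau)$ by $\Cov(X_\tau,X_{2\tau}-X_\tau)$. Writing $\sigma^2:=\Var(X_\tau)=\Var(X_{2\tau}-X_\tau)$, which is positive by hypothesis, and introducing
\begin{equation*}
\rho:=\frac{\Cov(X_\tau,X_{2\tau}-X_\tau)}{\sigma^2},
\end{equation*}
I would note that $|\rho|\le 1$ by the Cauchy--Schwarz inequality, so that all arguments of square roots, $\arcsin$ and logarithms occurring below stay within their admissible ranges.

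With this notation the equal-variance hypothesis collapses the fraction inside \eqref{eq:Gaussian-formula} to $\rho/2$, and adding $\tfrac12$ gives
\begin{equation*}
p(\tau)=1-\frac{2}{\pi}\arcsin\!\left(\sqrt{\tfrac{1+\rho}{2}}\right).
\end{equation*}
The next step is to feed this into $h$ from \eqref{eq:h}. The decisive simplification is that $\sin$ and $\arcsin$ cancel: since $\tfrac{\pi}{2}\bigl(1-p(\tau)\bigr)=\arcsin\!\bigl(\sqrt{(1+\rho)/2}\bigr)$, one obtains $\sin\!\bigl(\pi(1-p(\tau))/2\bigr)=\sqrt{(1+\rho)/2}$, whence
\begin{equation*}
H(\tau)=h(p(\tau))=1+\log_2\!\sqrt{\tfrac{1+\rho}{2}}=\tfrac12+\tfrac12\log_2(1+\rho).
\end{equation*}

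It remains to recognise the right-hand side of the proposition as the same quantity. Using bilinearity of covariance together with $X_{2\tau}=X_\tau+(X_{2\tau}-X_\tau)$, I would expand
\begin{equation*}
\Var(X_{2\tau})=\Var(X_\tau)+\Var(X_{2\tau}-X_\tau)+2\Cov(X_\tau,X_{2\tau}-X_\tau)=2\sigma^2(1+\rho),
\end{equation*}
so that $\Var(X_{2\tau})/\Var(X_\tau)=2(1+\rho)$ and hence $\tfrac12\log_2\!\bigl(\Var(X_{2\tau})/\Var(X_\tau)\bigr)=\tfrac12+\tfrac12\log_2(1+\rho)$, which coincides with the expression for $H(\tau)$ derived above. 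No genuine obstacle is expected: the argument is essentially a bookkeeping calculation resting on \eqref{eq:Gaussian-formula}. The only point demanding a little care is the verification that the various arguments remain in the domains where $\sqrt{\cdot}$, $\arcsin$, $h$ and $\log_2$ are defined and where the $\sin$--$\arcsin$ cancellation is valid; this is guaranteed by $0\le 1+\rho\le 2$, i.e.\ by $|\rho|\le1$.
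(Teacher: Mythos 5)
Your proof is correct and follows essentially the same route as the paper: both start from formula \eqref{eq:Gaussian-formula}, use the $\sin$--$\arcsin$ cancellation in $h$, exploit $X_0=0$ and the equal-variance hypothesis to simplify the argument of the logarithm, and then identify the result with $\tfrac12\log_2\bigl(\Var(X_{2\tau})/\Var(X_\tau)\bigr)$ via the bilinear expansion of $\Var(X_{2\tau})$. The only difference is bookkeeping: you package the covariance ratio as a single parameter $\rho$, while the paper manipulates the covariance and variance terms directly and isolates the identity $\Cov(X_\tau,X_{2\tau})=\tfrac12\Var(X_{2\tau})$ as an intermediate step.
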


\begin{proof}
Recall formula \eqref{eq:Gaussian-formula} for the change probability of a Gaussian process. Applying the transformation $h$ to $p(\tau)$ yields
\begin{align*}
H(\tau)=h(p(\tau))
&= 1+\frac{1}{2}\log_2\left(\frac{\Cov(X_\tau-X_0,X_{2\tau}-X_\tau)}{2\sqrt{\Var(X_\tau-X_0)\Var(X_{2\tau}-X_\tau)}}+\frac{1}{2}\right)\\
&=\frac{1}{2}+\frac{1}{2}\log_2\left(\frac{\Cov(X_\tau-X_0,X_{2\tau}-X_\tau)}{\sqrt{\Var(X_\tau-X_0)\Var(X_{2\tau}-X_\tau)}}+1\right)\\
&= \frac{1}{2}+\frac{1}{2}\log_2\left(\frac{\Cov(X_\tau,X_{2\tau}-X_\tau)}{\Var(X_\tau)}+1\right)\\
&= \frac{1}{2}+\frac{1}{2}\log_2\left(\frac{\Cov(X_\tau,X_{2\tau})-\Cov(X_\tau,X_\tau)}{\Var(X_\tau)}+1\right)\\
&= \frac{1}{2}+\frac{1}{2}\log_2\left(\frac{\Cov(X_\tau,X_{2\tau})}{\Var(X_\tau)}\right).
\end{align*}
By the assumptions, one has
\[ \Var(X_\tau) 
=\Var(X_{2\tau}-X_\tau) = \Var(X_{2\tau})-2\Cov(X_{2\tau},X_\tau)+\Var(X_\tau), \]
which is equivalent to
\begin{equation}
\Cov(X_\tau,X_{2\tau}) = \frac{1}{2}\Var(X_{2\tau}).
\label{eq:CovVar}
\end{equation}
Hence
\begin{align*}
\hspace{1,5cm} H(\tau)&= \frac{1}{2}+\frac{1}{2} \log_2\left(\frac{\Var(X_{2\tau})}{2\Var(X_{\tau})}\right) = \frac{1}{2} \log_2\left(\frac{\Var(X_{2\tau})}{\Var(X_\tau)}\right).\hspace{1.5cm}\qed
\end{align*}
\end{proof}
Note that if the process $(X_t)$ satisfies condition \eqref{a3}, then the assumption $\Var(X_\tau)=\Var(X_{2\tau}-X_\tau)$ in Proposition~\ref{propchar} is satisfied for all $\tau\in\N$.

 The above proposition shows that the change probabilities (and thus the scale-dependent Hurst exponents) of a Gaussian process are, under some stationarity conditions, related to its variance, which in turn is related to the covariance, as can be seen from \eqref{eq:CovVar}.
In the following the classical Hurst exponent will be discussed, which is based on the autocorrelation and therefore as well on the covariance of the process. In particular, a relationship between scale-dependent Hurst exponents and their classical counterpart will be established.

For a stochastic process $(X_t)_{t\in [0,\infty ]}$ satisfying \eqref{a1}, \eqref{a2}, \eqref{a3} and
\alpheqna
\setcounter{equation}{3}
\begin{equation}\label{a4}
0<\Var(X_t)<\infty\quad \mbox{ for all }t\in\ ]0,\infty[,
\end{equation}
\reseteqna
consider the \emph{autocorrelation function} $c:[0,\infty[ \rightarrow \R$ of the increment process $(X_{t+1}-X_t)_{t\in [0,\infty [}$ which is
defined by \[ c(\tau):= \Cov(X_{\tau+1}-X_\tau,X_1)/\Var(X_1), \]
for $\tau \in [0,\infty[$. (In case $\Var(X_1)=0$, not relevant at this point, one defines $c(\tau)=0$.)

Recall that for fBm with Hurst exponent $H$ it holds
\[c(\tau)=\frac{1}{2}\left (\lvert\tau +1\rvert^{2H}-2\lvert \tau\rvert^{2H}+\lvert\tau -1\rvert^{2H} \right ),
\]
for all $\tau \in [0,\infty[$ and
\begin{equation*}
\lim_{\tau\to\infty} \frac{c(\tau)}{\tau^{2H-2}}=H(2H-1),
\end{equation*}
cf.~e.g.\ \citeayip{Gneiting2004}.
The latter formula describes the asymptotic behavior of $c$ for large $\tau$ and gives rise to a general definition of the {\em Hurst exponent} $H$ of $(X_t)_{t\in \N_0}$ as the number
\begin{equation}\label{eq:HurstExp}
H:= 1+\frac{1}{2}\lim_{\tau \to \infty} \frac{\log(\lvert c(\tau) \rvert)}{\log(\tau)},
\end{equation}
provided this limit exists, see e.g.\ \citeayip{Gneiting2004}. To see that this definition is consistent with the role of the Hurst parameter for fBm, assume that the limit
\alpheqna
\setcounter{equation}{4}
\begin{equation}\label{a5}
\lim_{\tau\to\infty} \frac{c(\tau)}{\tau^{2H-2}}\mbox{ exists for some }H\mbox{ and differs from }0.
\end{equation}
\reseteqna
Then $H$ is necessarily the Hurst exponent of $(X_t)_{t\in [0,\infty [}$ as defined by \eqref{eq:HurstExp}, which is easily deduced from \eqref{a5}, see Lemma~\ref{lem:3} in the Appendix.
Moreover, if $H\geq 0.5$, then the limit in \eqref{a5} is necessarily positive, see Lemma~\ref{lem:2} in the Appendix. (The last assumption in \eqref{a5} implies in particular that the quotient ${c(\tau)}/{\tau^{2H-2}}$ is bounded away from zero for large $\tau$: there exists some constants $C,\tau_0>0$  such that either $c(\tau)\tau^{2-2H}>C$ for all $\tau\geq \tau_0$ or $-c(\tau)\tau^{2-2H}>C$ for all $\tau\geq\tau_0$. This mean that $c$ will not change its sign for $\tau\geq \tau_0$.)

In the sequel, all considerations will be restricted to Gaussian processes $(X_t)_{t\in [0,\infty[}$ satisfying the conditions \eqref{a1}-\eqref{a5}. By the above, this implies in particular that the Hurst exponent of $(X_t)$ exists. Note that if $(X_t)$ is not a fBm, then the values of $p(\tau)$ may vary with $\tau\in\ ]0,\infty[$, and therefore $H(\tau)=h(p(\tau))$ can be different from the Hurst exponent for some $\tau\in\ ]0,\infty[$. However, the above conditions ensure that the Hurst exponent $H(\tau)$ of delay $\tau$ converges to the Hurst exponent $H$ as $\tau\to\infty$.

\begin{theorem}\label{thm:1}
Let $(X_t)_{t_\in[0,\infty[}$ be a Gaussian process satisfying the conditions \eqref{a1}-\eqref{a4}. Assume that condition \eqref{a5} is satisfied for some constant $H$ such that $0.5\leq H<1$.
Then the Hurst exponent (as defined in \eqref{eq:HurstExp}) exists and equals $H$ and
\begin{equation*}
\liminf_{\tau \to \infty}\ H(\tau) \leq H \leq \limsup_{\tau \to \infty}\ H(\tau).
\end{equation*}
holds true. Thus, if the limit
\begin{equation}\label{eq:limm}
\lim_{\tau \to \infty}H(\tau)
\end{equation}
exists, then it necessarily equals $H$. 
\end{theorem}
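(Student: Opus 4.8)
The plan is to reduce everything to the behaviour of the variances $\Var(X_n)$ along \emph{integer} delays, where Proposition~\ref{propchar} is available, and then to transfer the resulting convergence to the $\liminf$/$\limsup$ over all real $\tau$. First I would record the two facts handed over by the appendix. Since \eqref{a5} holds, Lemma~\ref{lem:3} already yields that the Hurst exponent defined in \eqref{eq:HurstExp} exists and equals $H$; and since $H\geq 0.5$, Lemma~\ref{lem:2} guarantees that the limit $L:=\lim_{\tau\to\infty}c(\tau)/\tau^{2H-2}$ is \emph{positive}. This positivity, together with $\Var(X_n)>0$ from \eqref{a4}, is what keeps all the logarithms below well defined.

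Next I would restrict to integer delays $\tau=n\in\N$. By the remark following Proposition~\ref{propchar}, condition \eqref{a3} ensures $\Var(X_n)=\Var(X_{2n}-X_n)$, so the proposition applies and gives $H(n)=\tfrac12\log_2(\Var(X_{2n})/\Var(X_n))$. The point of the integer restriction is that $X_n=\sum_{k=0}^{n-1}(X_{k+1}-X_k)$ is a sum of unit increments, which by \eqref{a2}, \eqref{a3} and Gaussianity form a stationary sequence with autocovariance $r(0)\,c(\cdot)$, where $r(0)=\Var(X_1)$. Expanding the variance of this sum and counting lags, I obtain
\begin{equation*}
\Var(X_n)=r(0)\Big(n+2\sum_{m=1}^{n-1}(n-m)\,c(m)\Big)=:r(0)\,V(n),
\end{equation*}
so that $H(n)=\tfrac12\log_2\big(V(2n)/V(n)\big)$ and the whole question is reduced to the asymptotics of the Ces\`aro-type sums $V(n)$.

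The analytical heart of the argument is then an Abelian/Karamata step: from $c(m)\sim L\,m^{2H-2}$ with $L>0$ and $2H-2\in[-1,0)$, I would deduce that $V(n)$ is regularly varying of index $2H$, i.e.\ $V(n)=n^{2H}\ell(n)$ with $\ell$ slowly varying. Concretely, approximating $\sum_{m=1}^{n-1}(n-m)m^{2H-2}$ by a Riemann sum gives, for $0.5<H<1$, $V(n)\sim \frac{L}{H(2H-1)}\,n^{2H}$ (so $\ell$ is asymptotically constant), while the boundary case $H=0.5$ produces a logarithmic correction $V(n)\sim 2L\,n\log n$. In either case $\ell(2n)/\ell(n)\to 1$, whence
\begin{equation*}
\frac{V(2n)}{V(n)}=2^{2H}\,\frac{\ell(2n)}{\ell(n)}\longrightarrow 2^{2H},
\qquad\text{so}\qquad H(n)=\tfrac12\log_2\!\frac{V(2n)}{V(n)}\longrightarrow H.
\end{equation*}
The care needed here is twofold: the relation $c(m)\sim L\,m^{2H-2}$ is only asymptotic, so the finitely many small-$m$ terms must be shown to contribute merely $O(n)$, which is negligible against $n^{2H}$ (resp.\ $n\log n$); and the case $H=0.5$ has to be handled separately, since there the series $\sum_m c(m)$ diverges. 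I expect this Karamata estimate, and in particular controlling the error terms uniformly, to be the main obstacle.

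Finally, the passage from integers to all real delays is soft. The integers form a subsequence of $\tau\to\infty$, hence $\liminf_{\tau\to\infty}H(\tau)\le \lim_{n\to\infty}H(n)=H\le \limsup_{\tau\to\infty}H(\tau)$, which is exactly the claimed bracketing; the restriction to integers is forced because Proposition~\ref{propchar} is only guaranteed at integer delays, so $H(\tau)$ cannot be controlled directly for non-integer $\tau$. The last assertion is then immediate: if $\lim_{\tau\to\infty}H(\tau)$ exists, it equals the common value of the $\liminf$ and $\limsup$, which is squeezed to $H$.
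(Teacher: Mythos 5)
Your proposal is correct, and it shares the paper's overall skeleton --- reduce to integer delays via Proposition~\ref{propchar} and the expansion $\Var(X_n)=\Var(X_1)\bigl(n+2\sum_{k=1}^{n-1}(n-k)c(k)\bigr)$ --- but the analytic core is genuinely different. The paper is content with the logarithmic asymptotics $\log\Var(X_n)/\log n\to 2H$ (Propositions~\ref{prop:1} and \ref{prop:2}, proved via $\log(a+b)$ expansions and the crude bounds of Lemma~\ref{lem:1}); since this says nothing about the ratio $\Var(X_{2n})/\Var(X_n)$ for individual $n$, the paper must telescope along the dyadic sequence $\tau_n=2^n$ and invoke the Stolz--Ces\`aro theorem to relate $H$ to the Ces\`aro means of $H(2^i)=\tfrac12\log_2\bigl(\Var(X_{2^{i+1}})/\Var(X_{2^i})\bigr)$, which is exactly why it obtains only the $\liminf$/$\limsup$ bracketing. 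You instead push an Abelian summation step through to the precise regular-variation asymptotics $\Var(X_n)\sim \tfrac{L}{H(2H-1)}\Var(X_1)\,n^{2H}$ for $H>0.5$ (and $\sim 2L\,\Var(X_1)\,n\log n$ at $H=0.5$, with $L$ the positive limit in \eqref{a5}), whence $\Var(X_{2n})/\Var(X_n)\to 2^{2H}$ and the strictly stronger conclusion $H(n)\to H$ along \emph{all} integers; the stated bracketing is then an immediate subsequence remark. The two caveats you flag (the $O(n)$ contribution of the finitely many small lags, and the separate logarithmic treatment of $H=0.5$) are precisely the points that need writing out, and both go through. In short: the paper's route uses only log-scale information and pays for it with the Stolz--Ces\`aro detour and a weaker conclusion, while your route extracts more from hypothesis \eqref{a5} and delivers an actual limit along integer delays.
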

Theorem~\ref{thm:1} establishes an asymptotic relation between change probabilities and the Hurst exponent, which is the best one can hope for in the situation when there is no self-affinity present.  As the assumptions in Theorem~\ref{thm:1} are rather mild, one can expect them to be satisfied by many stochastic processes. Examples covered by Theorem~\ref{thm:1} that are not fBm are for instance provided by the processes discussed in \citeayip{Gneiting2004}: if $(X_t)$ is a Gaussian process satisfying \eqref{a1} such that its increment process is stationary and has a correlation function from the Cauchy class (or any of the other classes discussed in \citeayip{Gneiting2004}), then the conditions \eqref{a2}-\eqref{a5} are satisfied. The proof of Theorem~\ref{thm:1} can be found in the Appendix.

Notice that for data of rock surfaces the Hurst exponent is typically larger than 0.5 (see e.g.,\ \cite{candela2012}). Therefore, the condition $H\geq 0.5$ is not a serious restriction from the practical viewpoint.

\revision{Compared to the classical setting of fBm, the more general class of processes behind Theorem~\ref{thm:1} increases flexibility when working with Hurst exponents as a roughness measure. Now roughness is allowed to vary with the delay. Only some stabilization for increasing delays is assumed, allowing to still consider a single Hurst exponent. This provides a theoretical framework for the case that some stabilization of (estimated) Hurst exponents is observed for real data.}

\subsection{Estimating roughness}

Given a stochastic process $(X_t)_{t\in\ [0,\infty[}$ satisfying \eqref{eq:stat} for $t\in {\mathbb N}_0$, a natural
estimator of $p(\tau)$ for some $\tau\in\ ]0,\infty [$ and $N\in {\mathbb N}$ is given by
\begin{eqnarray*}
\widehat p^{(N)}(\tau) := \frac{1}{N} \sum_{t=0}^N (1_{\{X_t<X_{t+\tau},X_{t+\tau}\geq X_{t+2\tau}\}}+1_{\{X_t\geq X_{t+\tau},X_{t+\tau}< X_{t+2\tau}\}}).
\end{eqnarray*}
The following considerations are restricted to the case $\tau\in {\mathbb N}$. Principally, it is enough to start with a process $(X_t)_{t\in {\mathbb N}_0}$. Assume that for such a process, the increment process $(X_{t+1}-X_t)_{t\in {\mathbb N}_0}$ is a non-degenerated zero-mean stationary Gaussian process. Non-degenerated means that each of the finite distributions of the process is absolutely continuous with respect to the Lebesgue measure.

If $c(\tau)\to 0$ as $\tau\to\infty$, then $\widehat p^{(N)}(\tau)$ is a strongly consistent and asymptotically unbiased estimator  ($N\to \infty$)
of $p(\tau)$ for $\tau\in {\mathbb N}$. If, moreover,
\begin{eqnarray}\label{eq:betacond}
\lvert c(\tau) \rvert\cdot \tau^\beta \to 0\mbox{ as }\tau \to \infty\mbox{ for some }\beta<0.5,
\end{eqnarray}
then the estimator is asymptotically normally distributed as $N\to \infty$. This has been shown in \citeayip{Sinn2011} in the general context of ordinal pattern probabilities for $\tau=1$ using an asymptotic result of \citeayip{zbMATH00828385}; the generalization to $\tau\neq 1$ is straightforward. The idea goes back to the work of \citeayip {HO1987144}, where zero-crossings in a process are discussed. Note that a change in the original process corresponds to a zero-crossing in the associated increment process. Note also that \eqref{eq:betacond} is satisfied if condition \eqref{a5} holds with $0<H<0.75$.

When passing from $\widehat p^{(N)}(\tau)$ to $\widehat{H}^{(N)}(\tau):=h(\widehat p^{(N)}(\tau))$ and from $p(\tau)$ to $H(\tau)=h(p(\tau))$, all described estimation properties remain true, which follows by applying the delta method (see e.g.,\ \cite{deltaMethod}). In particular, for each $\tau\in {\mathbb N}$, $\widehat H^{(N)}(\tau)$ is an estimator of the Hurst exponent $H(\tau)$ with the described properties. It is interesting to note that the estimators $\widehat p^{(N)}(\tau)$ and $\widehat{H}^{(N)}(\tau)$, which are only based on the ordinal structure of a time series, show a good performance, in particular have a low bias (see \cite{Sinn2011}).

In the situation of Theorem \ref{thm:1}, the Hurst exponent $H$ can still be estimated by $\widehat{H}^{(N)}(2^n)$ with sufficiently large $n$ and $N$.


\section{An Algorithm for Visualizing and Detecting Fracture Roughness Anisotropy} \label{sec:algorithm}

In this section, an algorithm is described that takes a two-dimensional gray level image as input and returns an array of estimates of the Hurst exponents $H(\tau)$ for a number of directions $\phi$ and delays $\tau$. The algorithm may be viewed as an application of Theorem~\ref{thm:1}. In this theorem, the data is modeled by a stochastic process $X_t$ with $t \in \R$, while in practice one is dealing with discrete data, i.e.\ $X_t$ with $t\in \N_0$. Here it is necessary to determine what $t=1$ should mean, i.e.\ to fix a minimal spatial scale.
Since pixel images are considered here, it makes sense to set the width of one pixel to 1, which is the smallest meaningful distance between two points in these images.
However, when comparing images of different resolutions, the size of a pixel might correspond to different physical distances and then one has to be careful to compare quantities of physically matching delays.

\subsection{Input}

\begin{figure}[ht]
\centering
\begin{tikzpicture}
\pgfmathsetmacro{\a}{10.65};	
\pgfmathsetmacro{\b}{4};	
\pgfmathsetmacro{\angle}{30};	
\pgfmathsetmacro{\dist}{1}; 

\node[inner sep = 0] at (\a/2,\b/2) {\includegraphics[width=\a cm, height=\b cm]{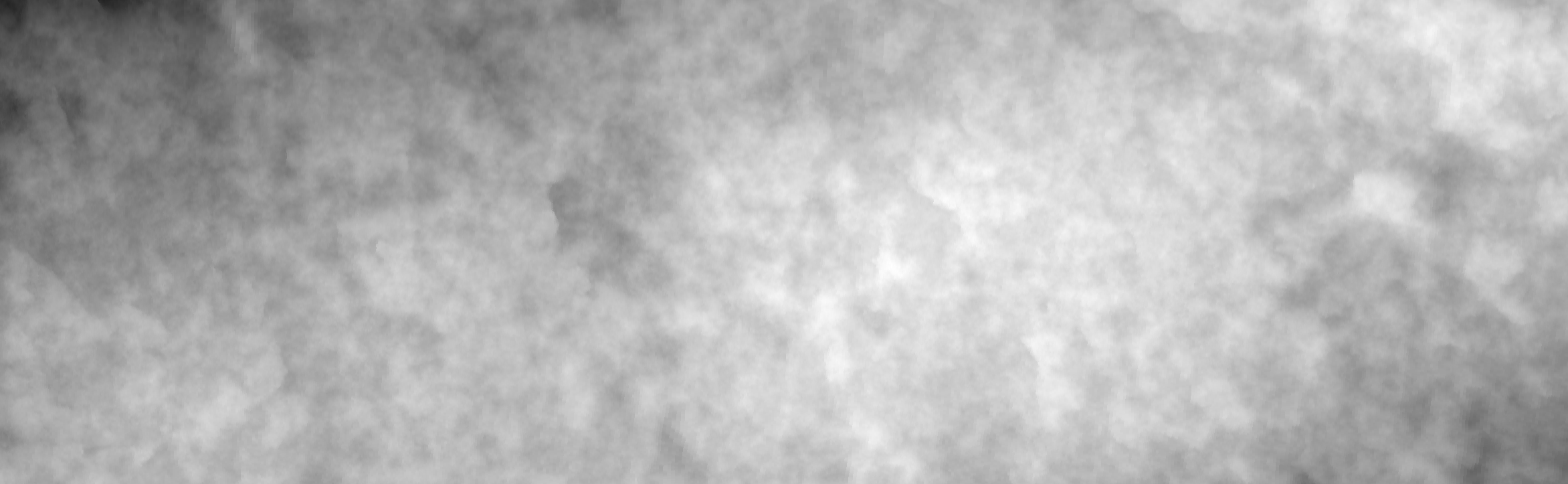}};


\pgfmathtruncatemacro{\m}{ceil(max(\a,\b)/\dist)};
\pgfmathsetmacro{\d}{1/cos(\angle)};
\foreach \i in {0,...,9}{
\pgfmathtruncatemacro{\j}{\i+1}
\node at (\i*\d,-0.3) {{\small $i=\j$}};
}
\begin{scope}
	\clip (0,0) rectangle (\a,\b);
	\begin{scope}[rotate = -\angle]
		\draw (0,1)  arc[radius = 1, start angle= 75, end angle= 180];
		\node at (-0.23,0.72) {$\phi$};
		\node[rotate = 90-\angle] at (-0.3,2.5) {$\mathbf{X}_\phi^i$};
		\foreach \x in {-\m,...,\m}{
			\foreach \y in {-\m,...,\m}{
				\draw[dashed] (\x,\y) -- (\x,\y+1);
				\fill (\x,\y) circle (0.1);
			}
		}
		\draw[|<->|] (0.7,3) -- node[above,rotate = 90-\angle] {$\tau$} (0.7,4);
		\draw[|<->|] (2,3.6) -- node[above,rotate = -\angle] {$\Delta$} (3,3.6);
	\end{scope}
\end{scope}

\node[inner sep = 0] at (\a+0.6,\b/2) {\includegraphics[width=1 cm, height=\b cm]{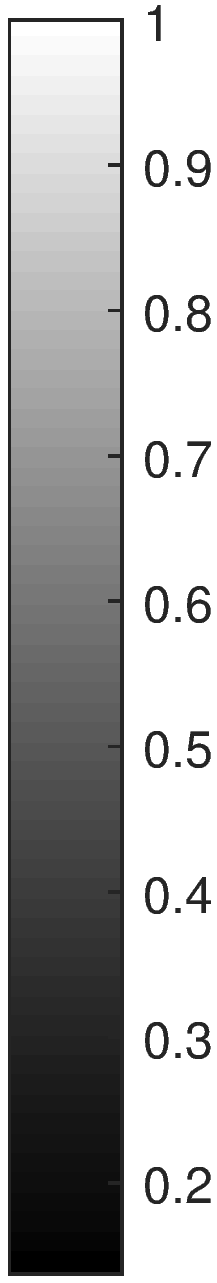}};
\end{tikzpicture}
\caption{
Exemplary input matrix of a natural sandstone fracture surface. The value of one corresponds to the maximum height of the fracture surface, the value zero corresponds to the minimum height. In step~\ref{step3} of the algorithm, parallel one-dimensional profiles $\mathbf{X}_\phi^i$ are extracted for a given angle $\phi$}
\label{fig:v(d,phi)}
\end{figure}

The input of the algorithm is a two-dimensional matrix $A=(a_{i,j})\in \R^{w_x\times w_y}$ of width $w_x\in \N$ and height $w_y\in \N$, where the entry $a_{i,j}$ corresponds to the relative height of the two-dimensional rock surface at the position $(i,j)$. This matrix can be represented as a gray level image, where the brightness of the pixel at position $(i,j)$ is equal to the value of $a_{i,j}$.

Additional inputs are $n_\phi \in \mathbb N$, a parameter of the algorithm fixing the number of angular directions $\phi$ for which profile lines are considered, and a vector $\overrightarrow{\tau}$ of length $n_\tau$ containing the delays $\tau$ at which the one-dimensional profiles will be scanned. The algorithm will choose $n_\phi$ angles equally spaced between $0^{\circ}$ and $180^{\circ}$ and the delays contained in $\overrightarrow{\tau}$ are positive integers. Optional input is a parameter $\Delta$ which fixes the distance between neighboring parallel profile lines (Fig.~\ref{fig:v(d,phi)}). The default value of $\Delta$ is $1$. Recall that value $1$ for the delay $\tau$ or the parameter $\Delta$ corresponds to a length equal to the width of a pixel.

\subsection{Algorithm} \label{subsec:algorithm}

\begin{figure}[ht]
\scalebox{0.65}{
\begin{tikzpicture}
\node[inner sep = 0pt] (Step1) at (-0.5,-0.5) {\includegraphics[width=6cm, height=6cm]{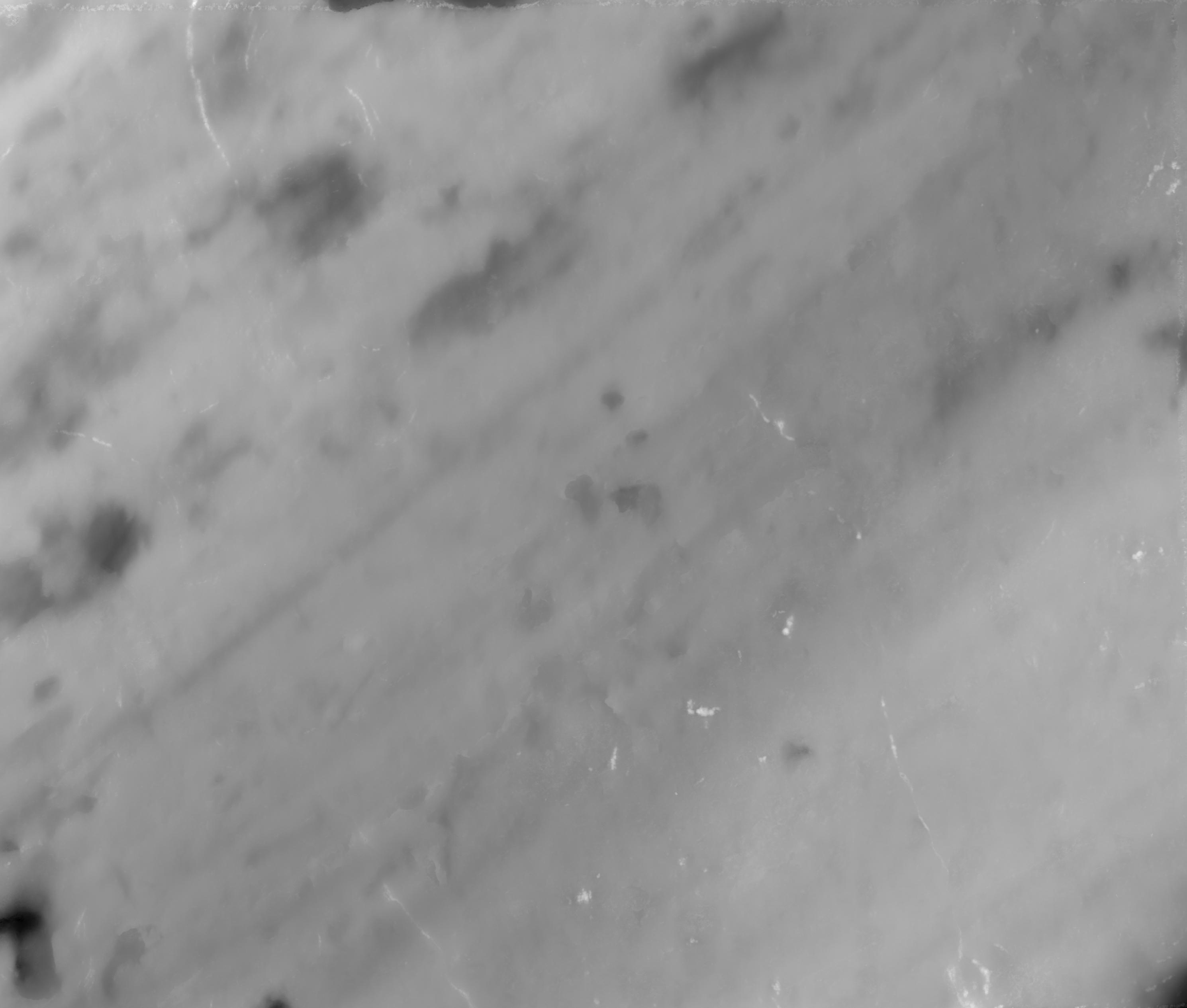}};
\node[yshift=3.8cm] at (Step1) {{\Large \textbf{Input data}}};
\foreach \i in {1,2,3,4,5}{
\node[inner sep = 0pt] (P\i) at (0.7*\i-2.0+10,0.3*\i-1.3) {\includegraphics[width=5cm, height=5cm]{1Dprofile\i}};
}
\node[yshift=4.3cm, xshift=2cm] at (P1) {{\Large \textbf{One-dimensional profiles}}};
\draw[->,line width = 2pt] (Step1) -- node[midway,right,yshift=0.3cm,xshift=-1.5cm] {{\Large Step~\ref{step2} and \ref{step3}}} (6.9,-0.5);
\node[inner sep = 0pt] (Step3) at (10,-8) {\includegraphics[width=8cm, height=6.5cm]{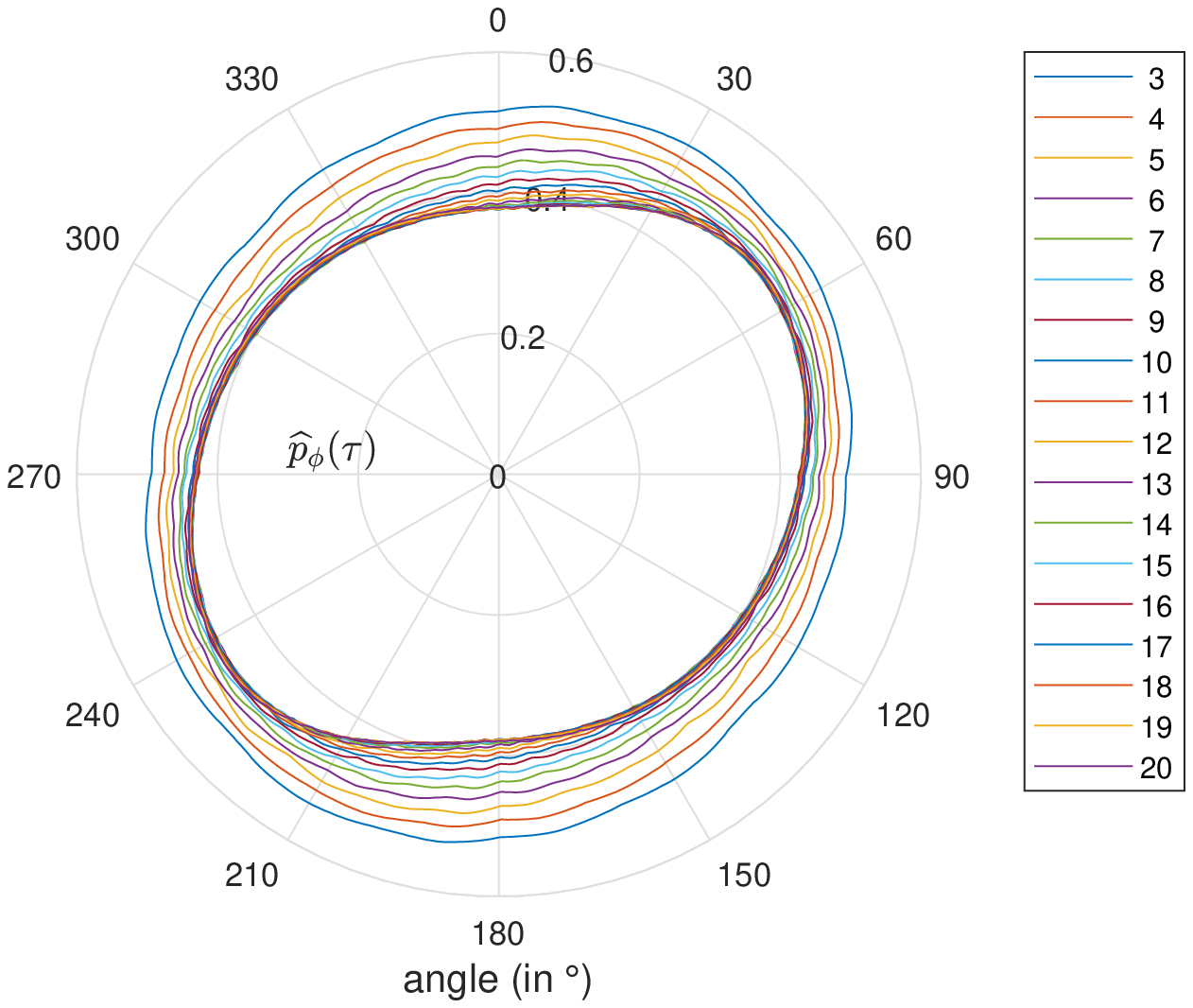}};
\node[yshift=-3.8cm] at (Step3) {{\Large \textbf{Change probabilities}}};
\draw[->,line width = 2pt] (P3)-- node[midway,right] {\Large {Step~\ref{step4}}} (Step3);
\node[inner sep = 0pt] (Step4) at (0,-8) {\includegraphics[width=8cm, height=6.5cm]{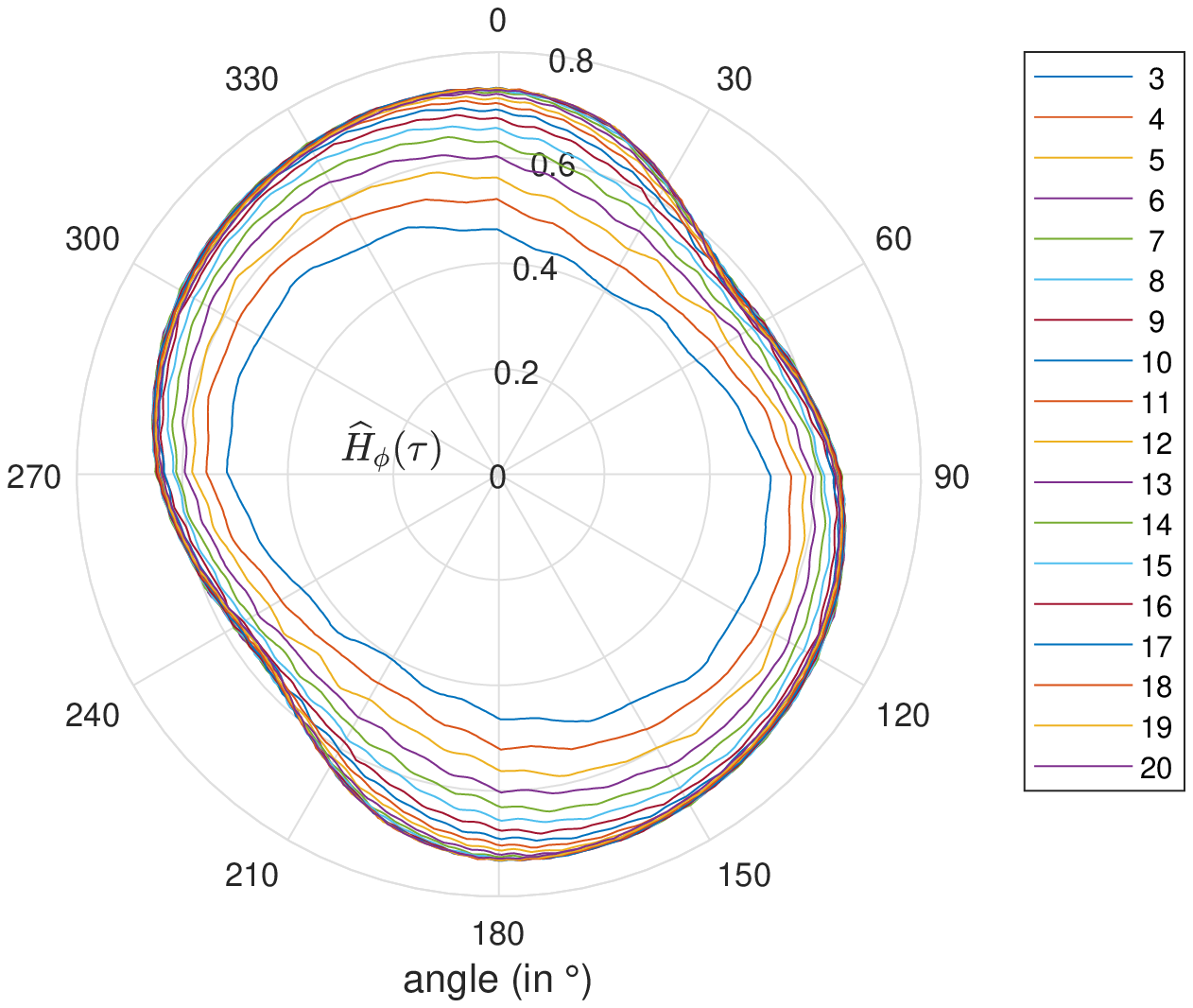}};
\node[yshift=-3.8cm] at (Step4) {{\Large \textbf{Hurst exponent}}};
\draw[->,line width = 2pt] (Step3)-- node[midway,right,yshift=0.3cm,xshift=-0.75cm] {{\Large Step~\ref{step5}}} (Step4);
\end{tikzpicture}
}
\caption{Diagram illustrating the different steps of the algorithm described in Sect.~ \ref{subsec:algorithm}. The polar plots that are created in Step 3 and 4, respectively, show the computed change probability and Hurst exponent as a function of the angle $\phi$ for different delays $\tau$ (represented by different colors)}
\label{fig:steps}
\end{figure}

\begin{enumerate}

\item \label{step2} \textbf{Preparation of the data}: A possible two-dimensional linear trend is removed from the input matrix $A$ by performing a linear regression and subtracting the resulting regression plane from $A$.

\item \label{step3} \textbf{Extraction of one-dimensional profiles}: For each of the $n_\phi$  
    angles $\phi$ create one-dimensional profiles in direction $\phi$ with delay $1$. To achieve this, consider first a line in the image through the origin in direction $\phi$, i.e.\ having angle $\phi$ with the vertical axis (see Fig.~\ref{fig:v(d,phi)}). Take a sequence of equally spaced points on that line which are a pixel width apart from each other.
    For each of these points calculate the height value at the corresponding position using linear interpolation of the given data $A$. Repeat this procedure for all lines intersecting the image that are parallel to the first line and an integer multiple of $\Delta$ apart from it. Each of the one-dimensional profiles extracted in this way will be saved in a vector $\vec{x}^{i}_{\phi} = (x^i_{\phi,t})_{t=0}^{m_{\phi,i}}$ where  $i=1,2,\ldots,k_{\phi,\Delta}$. Here $k_{\phi,\Delta}$ denotes the number of parallel lines intersecting the image for the given $\phi$ and $\Delta$, and $m_{\phi,i}$ +1 is the number of data points in the $i$-th profile.

\item \label{step4} \textbf{Calculation of change probabilities}: For each angle $\phi$ and each delay $\tau$ do the following calculations:
For each profile $i \in \{1,2,\ldots,k_{\phi,\Delta}\}$ with $m_{\phi,i}\geq 2\tau$ compute the number of changes
\begin{align*}
\numChanges_{\phi,\tau,i}:=  \sum_{j=0}^{m_{\phi,i}-2\tau} &\delta(x^i_{\phi,j}<x^i_{\phi,j+\tau} \mbox{ and } x^i_{\phi,j+\tau}\geq x^i_{\phi,j+2\tau})\\
+ &\delta(x^i_{\phi,j}\geq x^i_{\phi,j+\tau} \mbox{ and } x^i_{\phi,j+\tau}< x^i_{\phi,j+2\tau}),
\numberthis\label{eq:numChanges}
\end{align*}
where $\delta(B)=1$ if the assertion $B$ is true and $\delta(B)=0$ otherwise. (For $m_{\phi,i}< 2\tau$, $\numChanges_{\phi,\tau,i}$ is set to $0$.) The estimator $\widehat p_{\phi}(\tau)$ for the change probability is then calculated by
\[ \widehat p_{\phi}(\tau)=\frac{\sum_{i=1}^{k_{\phi,\Delta}} \numChanges_{\phi,\tau,i}}{\sum_{i=1}^{k_{\phi,\Delta}} (m_{\phi,i}-2\tau +1)}. \]


\item \label{step5} \textbf{Estimation of roughness}: For each angle $\phi$ and each delay $\tau$, estimate the roughness $H_{\phi}(\tau)$ of delay $\tau$ in direction $\phi$ by
\begin{equation}
\widehat H_{\phi}(\tau) = h(\widehat p_{\phi}(\tau)),
\label{eq:hatH}
\end{equation}
where $h$ is defined as in \eqref{eq:h}.
\end{enumerate}

\subsection{Output}
The output of the algorithm is a $n_\phi\times n_\tau$ matrix $\widehat{\mathbf{p}}=(\widehat p_{\phi}(\tau))$ containing the estimates for the change probabilities for each angle-delay pair $(\phi,\tau)$. Optional output is a $n_\phi\times n_\tau$ matrix $\widehat{\mathbf{H}}=(\widehat H_{\phi}(\tau))$
containing estimates for the Hurst exponents for each pair $(\phi,\tau)$. Based on this output, the program creates a polar plot of the estimated Hurst exponents $\widehat{H}_{\phi}(\tau)$ as a function of the angle $\phi$ for different delays $\tau$ (see the last step in Fig.~\ref{fig:steps}) and a plot of the estimated Hurst exponents $\widehat{H}_{\phi}(\tau)$ as a function of the delay $\tau$ for different angles $\phi$ (see Fig.~\ref{fig:HTauPlot}).


\subsection{Implementation}\label{sec:implementationDetail}
The delay $\tau \in \N$ should be chosen to be larger than $2$. If the considered angle $\phi$ is not a multiple of $90^\circ$, then the values for the one-dimensional profiles need to be extracted using interpolation. For $\tau\leq 2$, the value of two consecutive points $x^i_{\phi,t}$ and $x^i_{\phi,t+\tau}$ on the profile are so close together that they partially depend on the same values of the original input data. As a result, the interpolated one-dimensional profile will be smoother than the original rock fracture. Hence, the actual roughness will be underestimated. For angles $\phi$ close to a multiple of $90^\circ$ this effect is less prominent.

As an alternative to using linear interpolation for determining intermediate values, one could, for example, use nearest neighbor interpolation. But experiments have shown that the type of interpolation does not have a significant effect on the outcome.

Additionally, notice that the calculations of change probabilities for different angles do not depend on each other and can therefore be processed in parallel.
For the source code of this algorithm see \citeayip{Gutjahr-matlab-code}.

\subsection{Estimating a scale-independent Hurst exponent}
Given the estimators $\widehat H_\phi(\tau)$ for the scale-dependent Hurst exponents, in general, it is not clear what the best way would be to derive from them a single estimator for the (scale-independent) Hurst exponent in a given direction. Considering such a single estimator would, of course, only be justified in the presence of at least some self-affinity. The observation of some range of delays, for which the Hurst exponents do not change significantly, may give rise to the hypothesis that the underlying structure is self-affine for this range of delays. It is then reasonable to expect that the Hurst exponents for such delays best describe the given data. In this situation, the median value of the estimators $\widehat H_\phi(\tau)$ of the scale-dependent Hurst exponents, where $\tau$ varies in the specified range of delays, may be a good estimator for the scale independent Hurst exponent $H_\phi$ (for profiles in direction $\phi$).
The median as an estimator of the Hurst exponent has the additional advantage of being insensitive towards outliers, as can be seen in Fig.~\ref{fig:EffectOfNoise}. The outcome of this method when applied to two different natural fracture surfaces is depicted in Fig.~\ref{fig:medianHurst}.

The median is taken over the values $\widehat H_{\phi}(\tau)$ for $\tau \in \{3,4,\ldots,\tau_{max}\}$, where one needs to specify how to choose $\tau_{max}$. As explained in Sect.~\ref{sec:implementationDetail}, the value of $\widehat H_{\phi}(\tau)$ for $\tau\in\{1,2\}$ is not taken into account.

Clearly, $\tau_{max}$ should depend on the size of the image \revision{and is determined by
\begin{equation}
\tau_{max} = \left\lceil\sqrt[4]{w_X\cdot w_Y}\:\right\rceil,
\label{eq:tauMax}
\end{equation}
where $w_X$ and $w_Y$ are the widths of the image in X- and Y-direction measured in pixels. It is motivated by the need to find a compromise between the pattern length ($2 \tau +1$) and the number of patterns. Increasing the length decreases the number and vice versa.  Choosing for a profile of length $L$ a delay of order $\sqrt{L}$ results roughly in a number of $\sqrt{L}$ non-overlapping patterns. Here, non-overlapping means that the intervals spanned by the pattern locations are disjoint for different patterns. Since in a rectangular image the length $L$ of the profiles changes with the direction, the geometric mean of the profile lengths in X- and Y-direction is chosen as a compromise, resulting in the value $\tau_{max}=\left\lceil\sqrt{\sqrt{w_X\cdot w_Y}}\right\rceil$.}

\section{Application to Natural Fracture Surfaces}\label{sec:experiments}
In this section, the algorithm introduced above is applied to gray level images of two natural fractures, namely a tensile and a shear fracture. The analyzed tensile fracture is a bedding-parallel joint in a carbonate-cemented aeolian sandstone, which opened up within a block specimen sampled from the Schwentesius outcrop in Northern Germany (\cite{fischer_3d_2007}; \cite{heidsiek_small-scale_2020}). Since it can be described as an opening-mode fracture, the surface was not exposed to shear stress. A handheld laser scanner was used to produce a three-dimensional image of the rough fracture surface (Fig.~\ref{fig:originalData:schwentesius}). It is represented by a point cloud, corresponding to a set of data points where the spatial position of each point is uniquely defined by X, Y and Z coordinates. The average point spacing is $0.157$~mm. Further information on the specifications of the laser scanner as well as a photo of the bedding-parallel tensile fracture is provided by \citeayip{hale_method_2020}. The fracture surface does not show a discernible roughness anisotropy except for a barite vein, which cuts through the bedding plane and is noticeable as an elevated ridge-like structure. To make sure that this vein does not skew the result when estimating the roughness of this fracture, the right part of this surface, beginning at $350$~mm, is cut off before applying any methods to it (Fig.~\ref{fig:v(d,phi)}).

Furthermore, a fresh fault slip surface in limestone was used as additional input data for the subsequent roughness analysis (Fig.~\ref{fig:originalData:bolu}). It is part of the Bolu outcrop in Turkey, which is appendant to the North Anatolian fault zone (\cite{candela2012}; \cite{renard_constant_2013}).
The raw data can be found at \citeayip{isterre}. 
In contrast to the tensile fracture (Fig.~\ref{fig:originalData:schwentesius}), the sampled fault surface was exposed to shear stress and has experienced a minimum movement of approximately 20 m (\cite{candela2012}). The fault slip surface shows characteristic features like elongated lenses and linear striae which is also evident from Fig.~3 in \citeayip{candela2012} and Fig.~1 in \citeayip{renard_constant_2013}. For fault slip surfaces, a distinct roughness anisotropy was observed on various length scales, evident from deviating Hurst exponents or fractal dimensions parallel and perpendicular to the slip direction (e.g.,~\cite{candela2012}; \cite{renard_high_2006}; \cite{lee_structural_1996}). The point cloud was produced by a laboratory laser profilometer (Fig.~\ref{fig:originalData:bolu}) and shows a regular point spacing of 20 µm (\cite{candela2012}).

\begin{figure}[ht]
\begin{center}
\begin{subfigure}[c]{0.5\textwidth}
   \includegraphics[width=\textwidth]{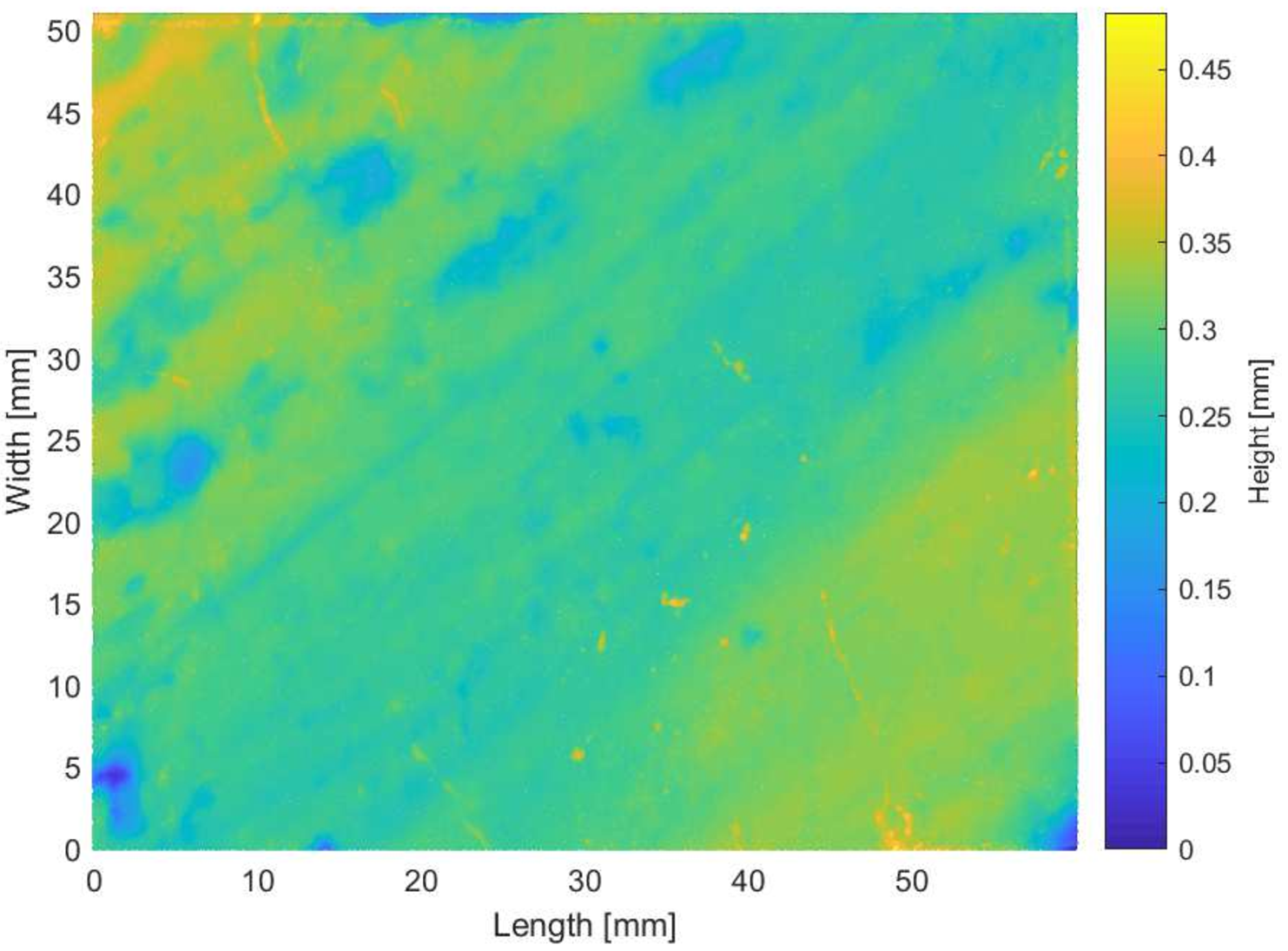}
\caption{}
\label{fig:originalData:bolu}
\end{subfigure}
\end{center}
\begin{subfigure}[c]{\textwidth}
\includegraphics[width=\textwidth]{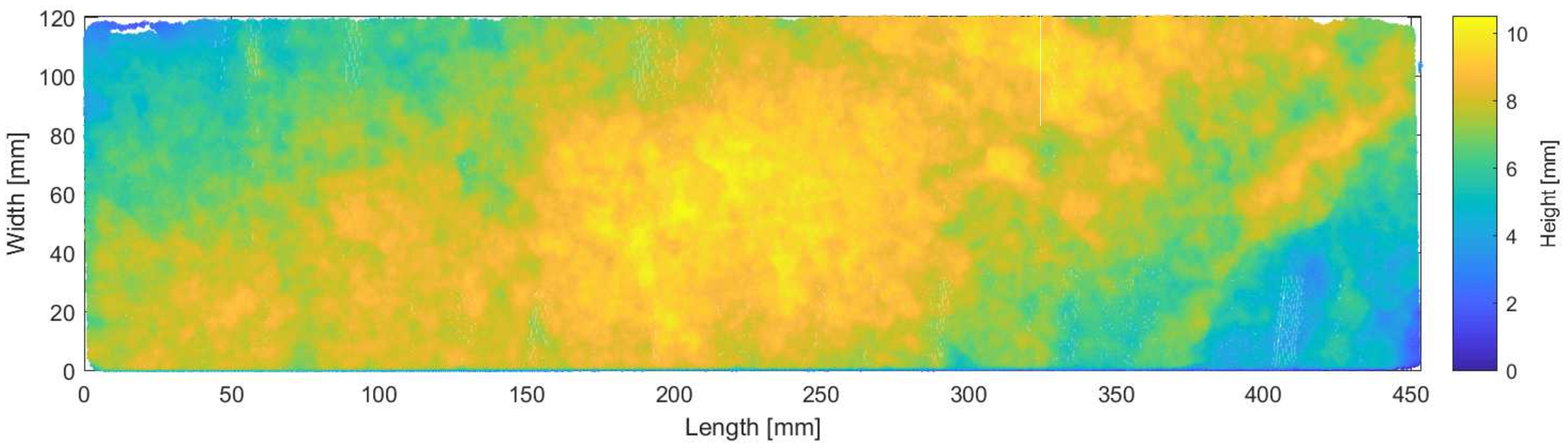}
\caption{}
\label{fig:originalData:schwentesius}
\end{subfigure}

\caption{Point cloud data of (a) a natural shear plane from a limestone outcrop (‘Bolu-1’, \cite{candela2012}) and (b) a natural bedding-parallel tensile fracture in sandstone. The x- and y-axis correspond to the length and width of the captured surface segment, respectively, the z-coordinate, indicated by the color bars, specifies the surface height (all units are in mm).  For the tensile fracture, the z-coordinate (surface height) varies between 0 and 10.53 mm (b), while the surface height variation of the fault plane segment (a) is below $500~\mu$m}
\label{fig:originalData}
\end{figure}

Based on the three-dimensional point cloud data of the fracture surfaces, two-dimensional gray level images are created which are required as input data for the algorithm presented in Sect.~\ref{subsec:algorithm}. First, a surface is fitted to the scattered point data by nearest-neighbor interpolation employing the standard Matlab routine. In this context, a mesh grid has to be specified for evaluating the created interpolant at designated locations. The resolution of the mesh grid should ideally match the resolution or the average point spacing of the point cloud in order to prevent a loss of roughness information. Here, the mesh resolution was set to $0.15$~mm for the tensile fracture and to $0.02$~mm for the fault slip surface. If necessary, the resulting matrix is cropped to remove less dense areas at the edges of the original point cloud. To export the matrix in binary pgm format, the surface height information (i.e. matrix values) is rescaled to fit into the interval $[0,255]$, where $0$ corresponds to the minimum and $255$ to the maximum surface height of the fracture surface.

\subsection{Dependence on the delay}\label{sec:SampleRate}
Figure~\ref{fig:HTauPlot} visualizes how the estimates $\widehat H_{\phi}(\tau)$ of the Hurst exponent change with the delay $\tau$ for fixed angles $\phi$.
Figure~\ref{fig:HTauPlot}a 
indicates that, independent of the angle $\phi$, the values of $\widehat H_{\phi}(\tau)$ increase with $\tau$ until a maximum is reached at around $\tau=19$ and then decrease until around $\tau=80$. In Fig.~\ref{fig:HTauPlot}b,
the values of $\widehat H_{\phi}(\tau)$ decrease with $\tau$ until a minimum is reached near $\tau=40$. The values then slightly increase and the average over all angles stabilizes near $0.4$.

\revision{For comparison, scale-dependent Hurst exponents are also estimated for simulations of fBm, using the same estimator based on change probabilities as for the rock profiles. The results are plotted as a red line in Fig.~\ref{fig:HTauPlot}. The line is derived from $100$ stochastically independent simulations of fBm, whose length  equals the geometric mean of width and height (in pixels) of the corresponding rock surface. The $\tau$-dependent Hurst exponents (see Eq.~\eqref{eq:hatH}) are computed for each simulation and then the average is taken. The Hurst exponent for the simulations was chosen as $0.46$ for the shear plane (left) and $0.51$ for the tensile fracture (right) so that they roughly correspond to the mean of the estimated Hurst exponents for the individual rock surfaces. Since the simulations of fBm are one-dimensional, this estimation does not depend on the angle $\phi$.
}
The observations indicate that for both fracture types the scale-dependent Hurst exponents depend indeed to some extent on the delay.

\begin{figure}[h]
\includegraphics[width=\textwidth,height=8.5cm]{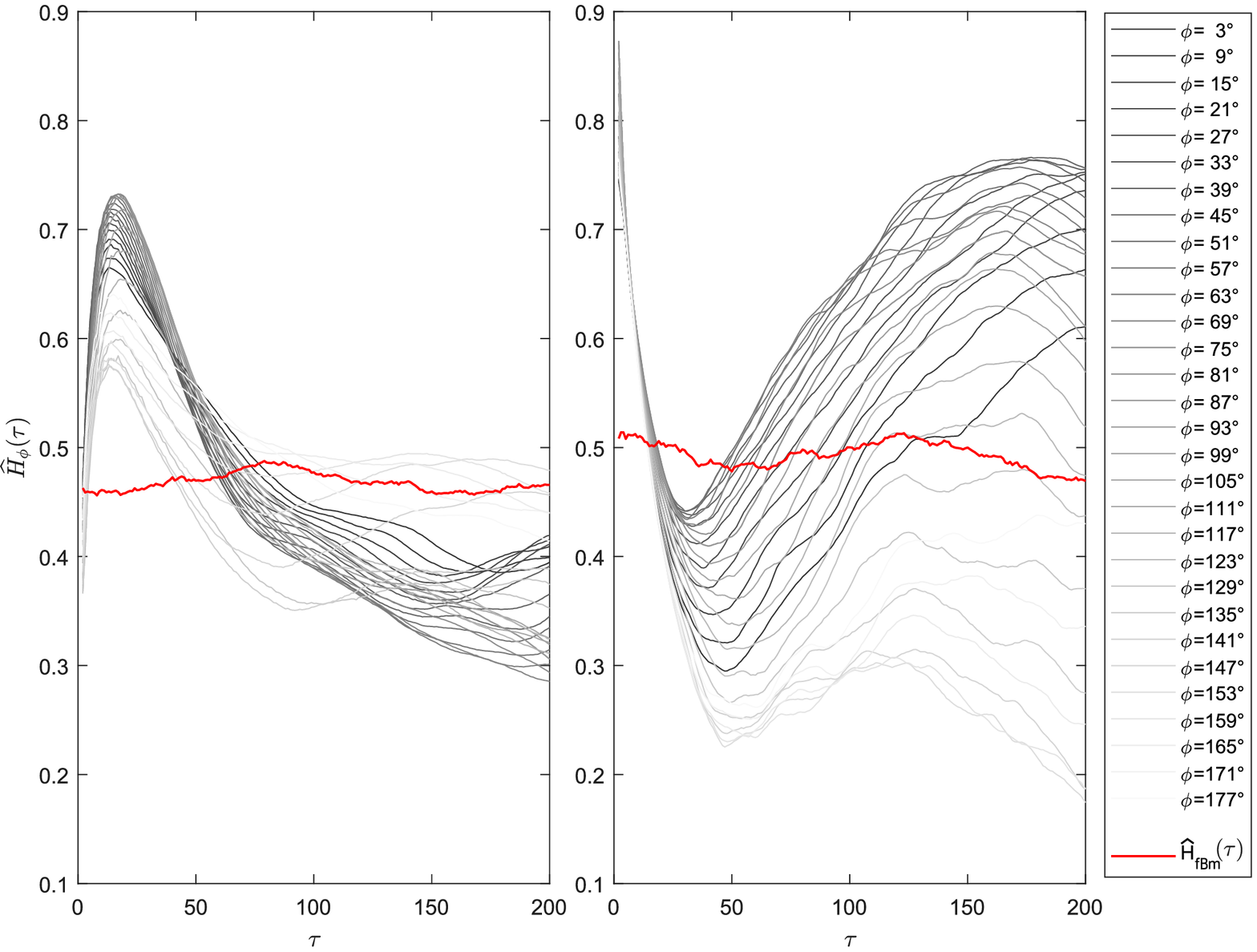}
\caption{Estimates $\widehat H_\phi(\tau)$ of the Hurst exponent (vertical axis) are plotted versus the delay $\tau$ (horizontal axis) for different angles $\phi$ for (left) the shear plane from Bolu outcrop (Fig.~\ref{fig:originalData:bolu}) and (right) the tensile fracture from Schwentesius outcrop (Fig.~\ref{fig:originalData:schwentesius}). Each of the different grey lines corresponds to one out of $30$ angles equally spaced between $0^\circ$ and $180^\circ$.
\revision{The red line $\widehat H_{\mathrm{fBm}}(\tau)$ shows estimates of the $\tau$-dependent Hurst exponents for fBm based on $100$ simulations of fBm}}
\label{fig:HTauPlot}
\end{figure}

\subsection{Comparison to a standard method}\label{subsec:standardMethod}
Figure~\ref{fig:Hcomparison} compares the results of the estimator $\widehat H_{\phi}(\tau)$ for $\tau=3$, $5$ and $25$ with the results provided by a standard method. The standard estimator $\widehat H_{\textrm{wavelet}}$ is based on a method, which is already implemented in the Matlab wavelet-toolbox. This method estimates the variances of wavelets at different detail levels and performs a regression of those variances versus the detail level in a log-log-plot (see e.g.,~\cite{Flandrin1992WaveletAA} and \cite{Abry00self-similarityand} for details). \revision{Technically, the wavelet based estimator only provides an estimate for the Hurst exponent if the underlying process is a fBm. For more general Gaussian processes, one can still calculate the value of $\widehat{H}_{\textrm{wavelet}}$ and it is reasonable to assume that it still represents some measure of roughness, but it is not necessarily equal to the Hurst exponent as defined in \eqref{eq:HurstExp}. It might be possible to interpret $\widehat{H}_{\textrm{wavelet}}$ roughly as some weighted average of the scale dependent Hurst exponents over the available scales, but this is not obvious. Certainly its computation involves information from many delays.}

Figure~\ref{fig:Hcomparison:bolu} shows that the estimates of the Hurst exponent based on $\widehat H_{\phi}(3)$ and $\widehat H_{\textrm{wavelet}}$ are very close to each other for all angles $\phi$, whereas the one based on $\widehat H_\phi(25)$ yields generally larger values than these two. Moreover, the values of $\widehat H_{\phi}(25)$ vary more smoothly with  $\phi$ than the values of $\widehat H_{\phi}(3)$ and $\widehat H_{\textrm{wavelet}}$. All curves have in common that they attain their minimum value for angles around $50^\circ$ and a maximum near $140^\circ$.

In Fig.~\ref{fig:Hcomparison:schwentesius}, the values of $\widehat H_{\textrm{wavelet}}$ are close to those of $\widehat H_\phi(5)$ for most $\phi$. The curves for $\widehat H_\phi(3)$, $\widehat H_\phi(5)$ and $\widehat H_{\textrm{wavelet}}$ are more circular in shape than the ones obtained for the other fracture in Fig.~\ref{fig:Hcomparison:bolu}.

\begin{figure}[ht]
\begin{subfigure}[c]{0.5\textwidth}
\includegraphics[width=\textwidth]{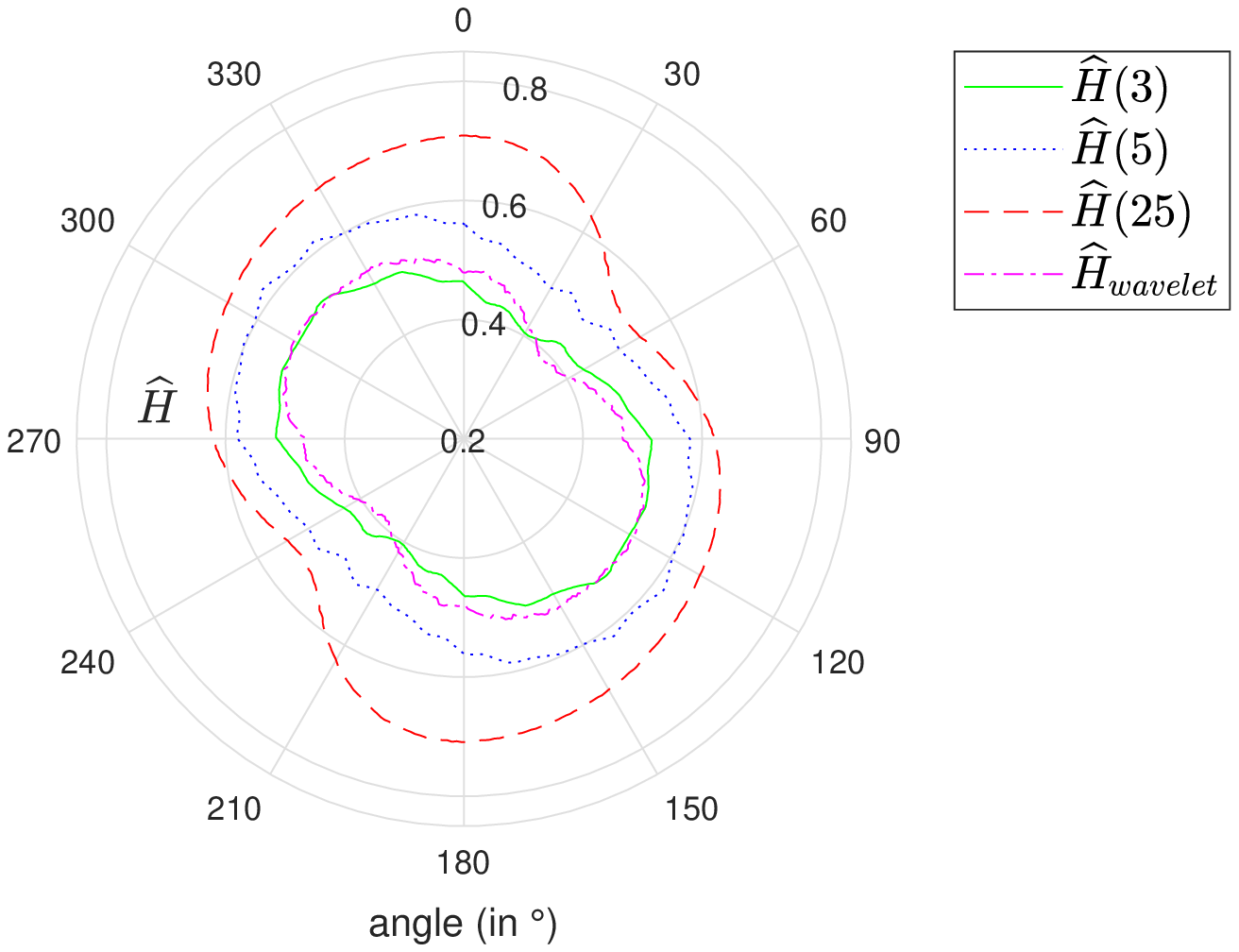}
\caption{}
\label{fig:Hcomparison:bolu}
\end{subfigure}~
\begin{subfigure}[c]{0.5\textwidth}
\includegraphics[width=\textwidth]{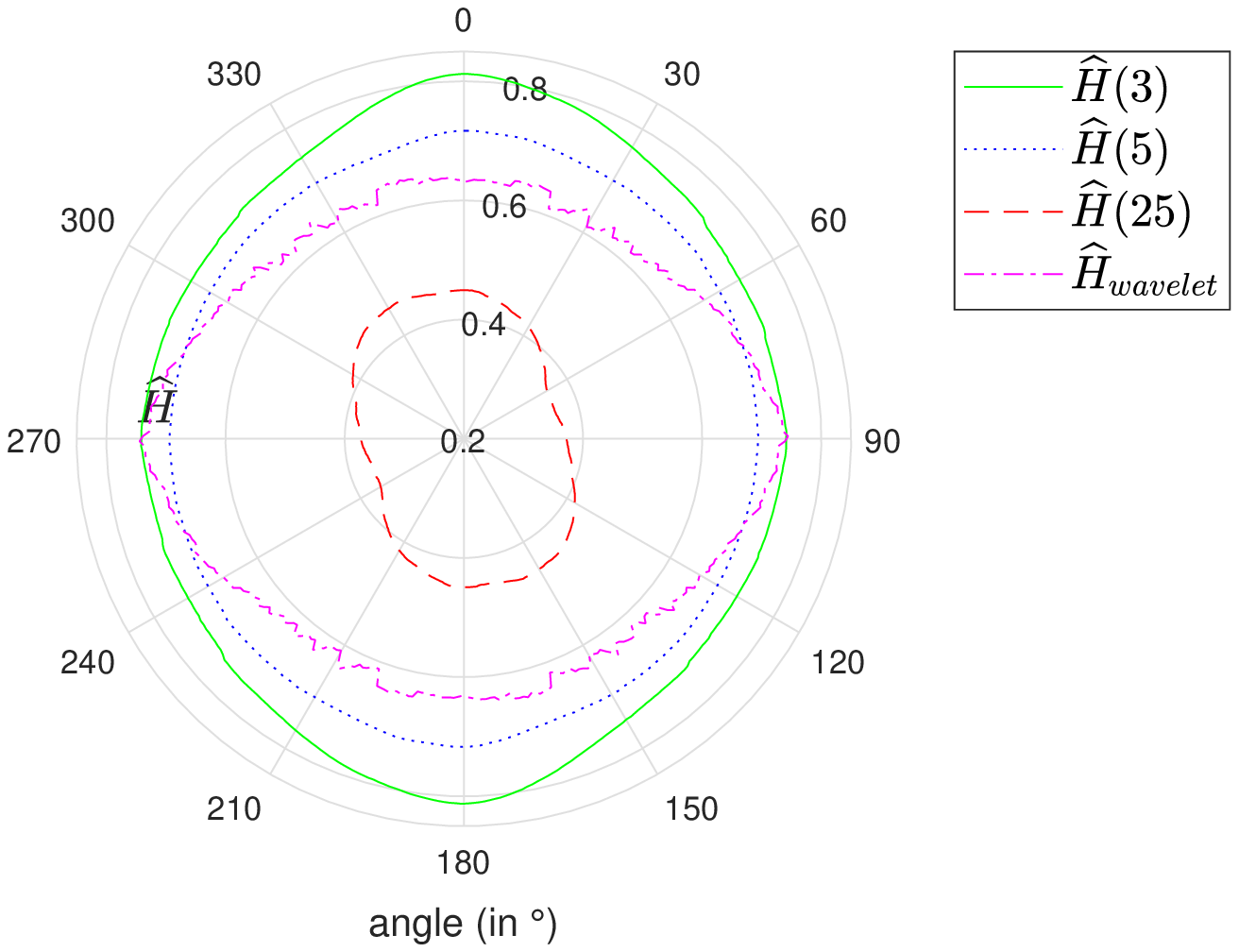}
\caption{}
\label{fig:Hcomparison:schwentesius}
\end{subfigure}
\caption{Radial plots of estimated Hurst exponents as functions of the angle $\phi$ for (a) the shear plane (Fig.~\ref{fig:originalData:bolu}) and (b) the tensile fracture (Fig.~\ref{fig:originalData:schwentesius}). The solid, the dotted and the dashed lines correspond to the estimates based on  $\widehat H_\phi(\tau)$ for the delays $\tau=3$, $5$ and $25$, respectively. The dash-dotted line shows the estimated Hurst exponent $\widehat H_{\textrm{wavelet}}$ using the wavelet based method implemented in Matlab}  
\label{fig:Hcomparison}
\end{figure}

\subsection{Noise effects}

\begin{figure}
\includegraphics[width=\textwidth]{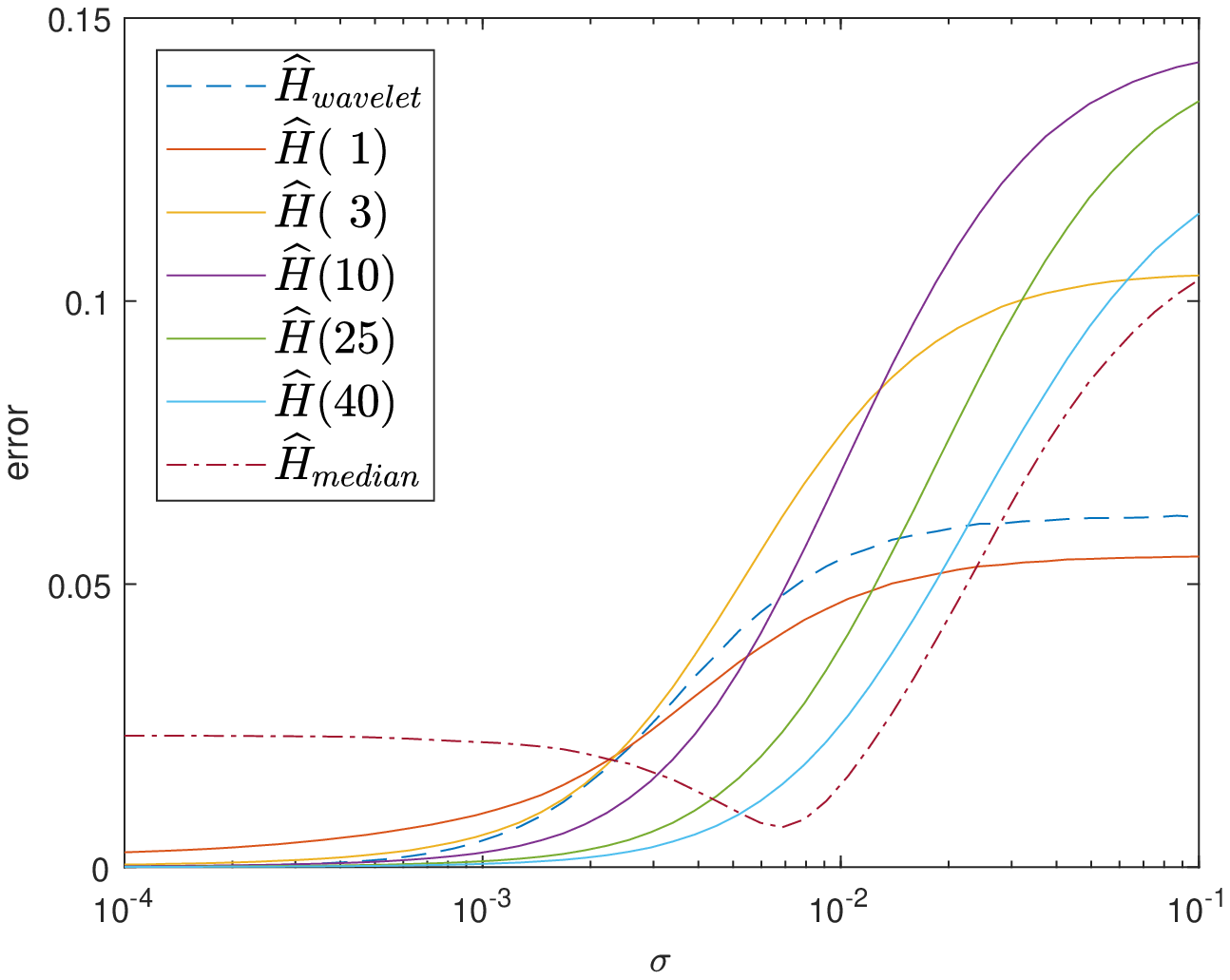}
\caption{
The effect of adding white noise of standard deviation $\sigma$ to the shear plane (Fig.~\ref{fig:originalData:bolu}) is presented for different roughness estimators.
The plots show the error (as defined in 
\eqref{eq:error} with $n_\phi=20$ equally spaced angles) as a function of $\sigma$. 
The dashed line corresponds to the estimator based on wavelets mentioned in Sect.~\ref{subsec:standardMethod}, the dash-dotted line to the median estimator proposed in Sect.~\ref{sec:SampleRate} and the solid lines to the estimators based on change probabilities for different delays}
\label{fig:EffectOfNoise}
\end{figure}

Figure~\ref{fig:EffectOfNoise} shows the effect of noise on the estimators for the Hurst exponent. To simulate this effect, white noise with mean zero and variance $\sigma^2$ was added to each pixel of the image.
The Hurst exponent was then estimated for the noisy image using on the one hand the method implemented in Matlab and on the other hand the new proposed estimators $\widehat H_\phi(\tau)$ for different values of $\tau$. The squares of the differences between the resulting estimates $\widehat H_\phi^\sigma$ and the estimates $\widehat H_\phi$ for the undisturbed image were summed over all considered angles $\phi$. The error is then defined to be the square root of this sum divided by the number $n_\phi$ of different angles
\begin{align}\label{eq:error}
 \textrm{error}=\frac{1}{n_\phi} \sqrt{\sum_{\phi} \left(\widehat H_\phi^\sigma-\widehat H_\phi\right)^2}.
\end{align}
As a reference for the scale of the image values, the square root of the squared average deviation of the pixel values from the mean image value is considered, which is given by
\begin{equation}
\sigma_{img}= \sqrt{\frac{1}{w_x\cdot w_y-1}\sum_{i=1}^{w_x} \sum_{j=1}^{w_y} \left(a_{i,j}-\frac{1}{w_x\cdot w_y}\sum_{i'=1}^{w_x} \sum_{j'=1}^{w_y} a_{i',j'} \right)^2},
\label{eq:stdImage}
\end{equation}
where $(a_{i,j})_{i,j=1}^{w_x,w_y}$ are the pixel values of the image.

In Fig.~\ref{fig:EffectOfNoise}, the effect of noise on different estimators is studied for the shear plane (Fig.~\ref{fig:originalData:bolu}). For this purpose white noise with standard deviation $\sigma$ was added to the surface for different values of $\sigma$ ranging from $10^{-1}$ to $10^{-4}$.
For comparison, the standard deviation over the image as defined in \eqref{eq:stdImage} is approximately equal to $\sigma_{img} \approx 0.064$. For the computation of the error, the profiles in $n_\phi=20$ different directions $\phi$ (chosen equally spaced between $0$ and $\pi$) are taken into account.

\section{Discussion}\label{sec:discussion}
\subsection{Computational Advantages}
From the computational point of view, the estimator $\widehat H_{\phi}(\tau)$ has some advantages compared to alternative methods. Firstly, no linear regression of some logarithmic property is needed to determine $\widehat H_{\phi}(\tau)$. In case the data can be modeled by fBm, the value of $\widehat H_{\phi}(\tau)$ for just one delay $\tau$ is a sufficient estimator for the Hurst exponent. In contrast, for other variance based methods, some property of the data needs to be estimated over many different scales $\tau$ and the Hurst exponent is then obtained as the slope of some linear function of (the logarithm of) $\tau$ determined by this property. Thus, repeated calculations for multiple delays are necessary in this case.

Further, the calculations needed to determine $\widehat p_{\phi}(\tau)$ (and thus $\widehat H_{\phi}(\tau)$) via the number of changes as given by \eqref{eq:numChanges} are mathematically very elementary. One only needs to compare pairs of values, apply some Boolean operations and then count the number of true results. Essentially no costly multiplications are necessary. Hence, the necessary calculations can be done very quickly and, moreover, they can easily be parallelized.

\subsection{Detection of Anisotropy}
Comparing Fig.~\ref{fig:Hcomparison:bolu} and \ref{fig:Hcomparison:schwentesius}, one can observe that the closed lines in the polar plots describing the angle dependent roughness are much more circular in Fig.~\ref{fig:Hcomparison:schwentesius} than in Fig.~\ref{fig:Hcomparison:bolu}. The same can be observed when comparing the median roughness in Fig.~\ref{fig:medianHurst:schwentesius} with the one in Fig.~\ref{fig:medianHurst:bolu}. Therefore, one can conclude that the tensile fracture exhibits less roughness anisotropy than the shear plane. This is in line with the fact that the tensile fracture shows no visible anisotropy, which can be expected due to the formation mechanism.

In contrast, the existing roughness anisotropy of the shear plane was detected with the methods introduced here.

\subsection{Dependence on the Delay}
Figure~\ref{fig:HTauPlot} shows that the value of the $\widehat H_{\phi}(\tau)$ is not constant over all delays $\tau$. \revision{One could ask whether this is merely a statistical phenomenon caused by fluctuating estimates. However, the clear monotonic behavior of the estimates as a function of $\tau$ contradicts such a hypothesis. Comparing the fluctuations of $\widehat H_{\phi}(\tau)$ to the fluctuations of corresponding estimates $\widehat H_{\mathrm{fBm}}(\tau)$ of fBm (see  the red line in Fig.~\ref{fig:HTauPlot}) further contradicts this hypothesis. 
Already a rough visual comparison shows that $\widehat H_{\mathrm{fBm}}(\tau)$ and $\widehat H_{\phi}(\tau)$ behave significantly different. Additionally, the relatively small fluctuations of the red line confirm that for fBm the $\tau$-dependent Hurst exponents do not depend on the delay $\tau$ as explained in Sect.~\ref{sec:fbm}. The dependence of the estimated Hurst exponents $\widehat H_{\phi}(\tau)$ on the delay indicates that modeling of the considered data by fBm only makes sense to a limited extent.}

The delays $\tau=19$ in Fig.~\ref{fig:HTauPlot}a 
and $\tau=40$ in Fig.~\ref{fig:HTauPlot}b, 
for which $\widehat H_{\phi}(\tau)$ is maximal or minimal, respectively, might correspond to the length of some characteristic structure on the fracture surface. The clear maximum visible in Fig.~\ref{fig:HTauPlot} could, potentially, be used to identify such characteristic lengths.

For larger delays $\tau$, the value of $\widehat H_{\phi}(\tau)$ seems to fluctuate less and stabilize around some constant value depending on the angle $\phi$. This observation is consistent with the theoretical results in Sect.~\ref{sec:asymptoticProperties}.

Additionally, notice that $\widehat H_{\phi}(\tau)$ is, on average, tending towards values around $0.5$ for large $\tau$ in Fig.s~\ref{fig:HTauPlot}a 
and \ref{fig:HTauPlot}b. 
Recall that data with independent increments can be modeled by a fBm with Hurst exponent $H=0.5$. The above observation could therefore be explained by the fact that the stochastic dependence between points on the fracture surface, which are separated by a large distance $\tau$, decreases faster than predicted by the mathematical model for the fracture surface.

\begin{figure}[ht]
\begin{subfigure}[c]{0.48\textwidth}
\centering
\includegraphics[width=\textwidth]{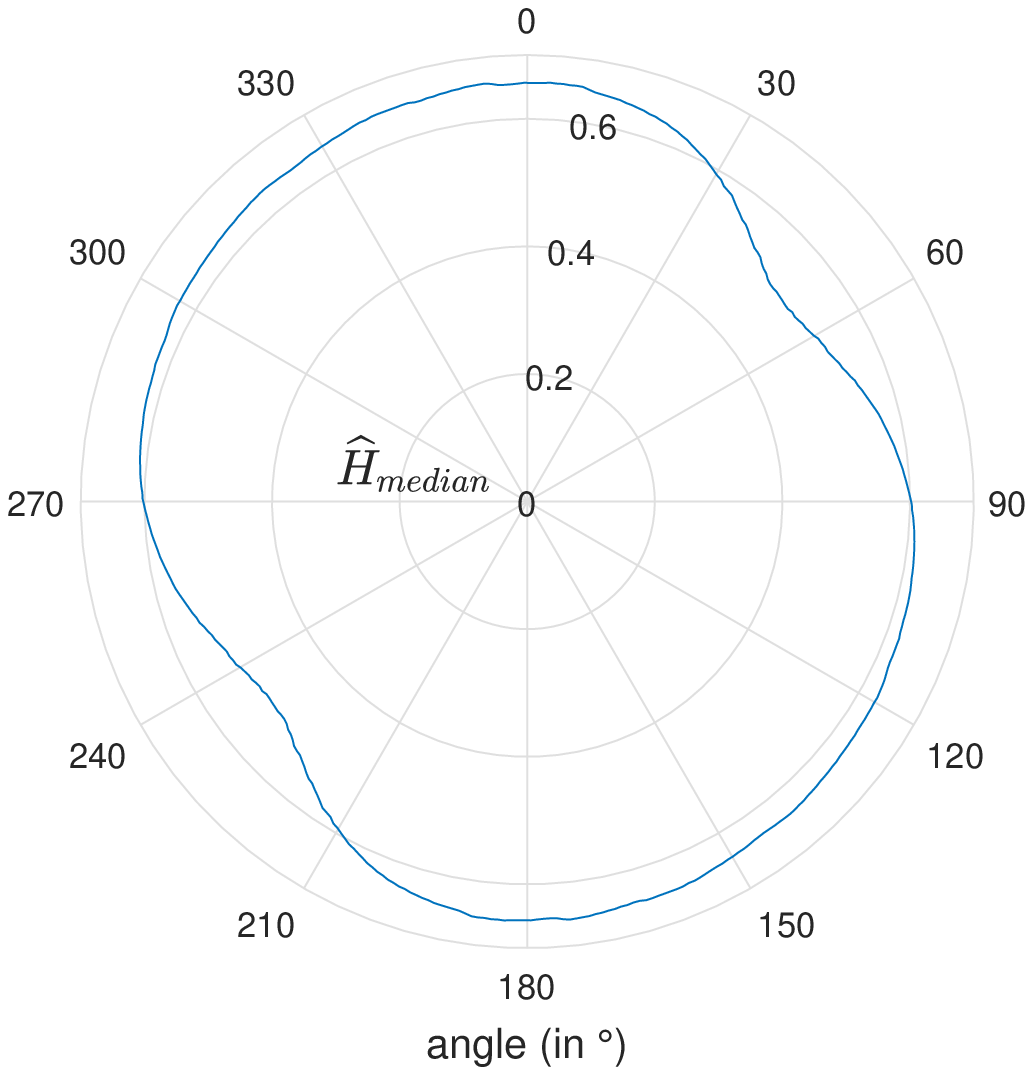}
\caption{Shear fracture ($\tau_{max}=53$)}
\label{fig:medianHurst:bolu}
\end{subfigure}~
\begin{subfigure}[c]{0.48\textwidth}
\centering
\includegraphics[width=\textwidth]{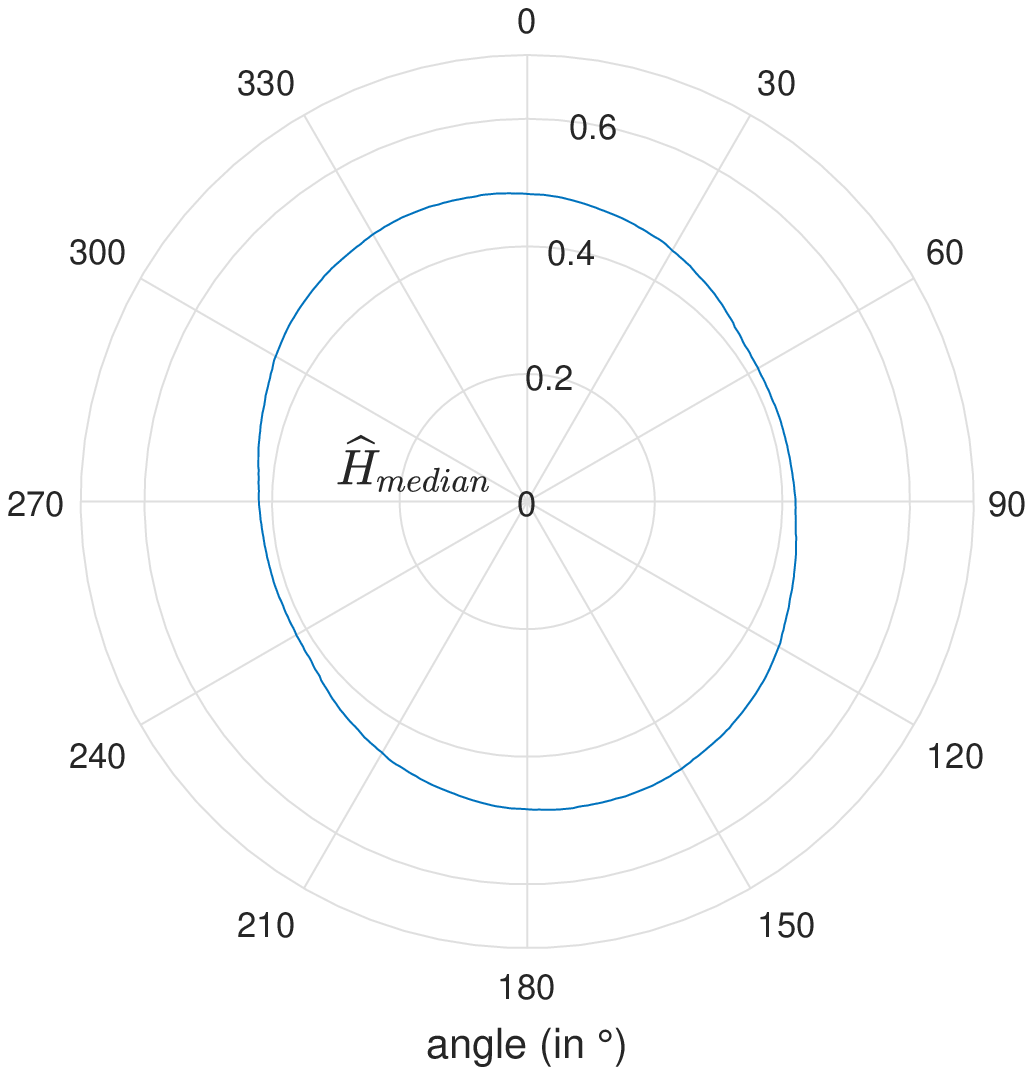}
\caption{Tensile fracture ($\tau_{max}=37$)}
\label{fig:medianHurst:schwentesius}
\end{subfigure}
\caption{Polar plots showing for the two natural fracture surfaces (Fig.~\ref{fig:originalData}) the \emph{median Hurst exponent} on the radial axis in dependence of the angle $\phi$. It was calculated by taking for each angle $\phi$ the median value of the change probabilities for integer delays $\tau$ between $3$ and $\tau_{max}$ and then applying formula~\eqref{eq:h} to this median. $\tau_{max}$ was determined according to Eq.~\eqref{eq:tauMax}}
\label{fig:medianHurst}
\end{figure}

\subsection{Noise effects}\label{sec:noise}
Figure~\ref{fig:EffectOfNoise} shows that the larger the sampling rate $\tau$, the lesser the estimator $\widehat H_{\phi}(\tau)$ is affected by noise. 
This is not surprising. Since the difference between values $x_{t}$ and $x_{t+\tau}$ is typically the smaller on average the smaller the $\tau$, small random perturbations are more likely to cause a change of the pattern of some $(x_t,x_{t+\tau},x_{t+2\tau})$ for smaller $\tau$.
Furthermore, one can see that the estimator $ \widehat H_{\phi}(\tau)$ is less affected by noise than the wavelet based  estimator $\widehat H_{\textrm{wavelet}}$, provided that the strength of the noise is not too large. The reason for this is that small random perturbations of given data $(x_t)_{t=1}^n$ will only significantly effect the estimator $\widehat H_{\phi}(\tau)$ if the randomness changes the ordering of $x_{t},x_{t+\tau},x_{t+2\tau}$ for sufficiently many $t$. 

\subsection{Comparison to Standard Method}
In Fig.~\ref{fig:Hcomparison}, one can see that for specific sampling rates $\tau$ the estimator $\widehat H_\phi(\tau)$ yields results that are comparable to standard methods. Figure~\ref{fig:Hcomparison:bolu} shows that, even though the value of $\widehat H_\phi(\tau)$ is different for larger delays $\tau$, the angles that yield the minimal and maximal Hurst exponent remain unchanged. This indicates that the structure of the rock surface that is causing the anisotropy is invariant over a specific range of scales.

One can additionally observe the greater smoothness of $\widehat H_\phi(25)$ compared to $\widehat H_\phi(3)$, which is in line with the effects described in Sect.~\ref{sec:noise}. For the same reason, the angle at which the minimum of $\widehat H_\phi(25)$ over all angles is found, is more pronounced. Hence, the delay $\tau$ should not be chosen too small in general.

For the exemplary fractures considered here, $\tau=5$ is considered reasonable. Since it is difficult to know in advance which value for $\tau$ is a good choice, calculating the median fracture roughness over many delays $\tau$ as shown in Fig.~\ref{fig:medianHurst} is appropriate in these cases.

\section{Conclusion} \label{sec:conclusion}
The proposed concept of scale-dependent Hurst exponents based on change probabilities  provides scale and direction dependent roughness information for rough surfaces beyond the summarizing information provided by classical (single valued) roughness measures.
Beside some stationarity, the new method does not require any additional assumption on the studied surface or its directional profiles such as self-affinity or scale-invariance and may therefore be applied in situations when, strictly speaking, existing methods are not applicable because they are based on such assumptions.

In the presence of self-affinity, there is a direct relation to the classical Hurst exponent. As is discussed in Theorem 1, an asymptotic relation between scale-dependent and classical Hurst exponents can even be established for a much wider
class of processes than just fBm. This suggests to use
change probabilities (if not as a new independent roughness measure) as a
means of estimating the classical Hurst exponent in such situations.

It was demonstrated that estimators for roughness based on change probabilities yield results similar to alternative methods,  while having computational advantages and being less sensitive to noise.

On the practical side, by exploring two natural fracture surfaces, it was illustrated how the methods proposed here can be used to quantify fracture roughness and to identify roughness anisotropy.  In particular,  these methods allow to investigate how fracture roughness varies across different scales. This is useful, for example, for checking the hypothesis of self-affinity.

In the self-affine case, all estimated scale-dependent Hurst exponents defined on the base of change probabilities are expected to be near to the true Hurst exponent of the structure. Otherwise, in the absence of self-affinity or milder adequate premises, estimators of a possibly undefined Hurst exponent can yield diverse values that are difficult to interpret. This may even result in conflicting statements about the direction of maximum roughness for anisotropic rock surfaces (i.e. shear or fault surfaces). In this case, the use of the scale-dependent Hurst exponents could possibly help to find anisotropies, while their detection by only one (averaging) quantifier is impossible.

The proposed method may also be useful for extracting additional scale-dependent information about the structure of a fracture surface such as the characteristic size of building blocks of a material.




\bibliographystyle{MG}       
{\footnotesize
\bibliography{bibliography}}   

\appendix
\section*{Appendix: Proof of Theorem~\ref{thm:1}}

\normalsize
For simplicity, it is assumed throughout in the sequel (and without further mention) that $(X_t)_{t\in [0,\infty [}$ is a Gaussian stochastic process satisfying conditions {\bf\eqref{a1}--\eqref{a4}}. In some of the statements below, these assumptions could be weakened insignificantly, but this is not relevant for proving Theorem~\ref{thm:1}. Further assumptions will be mentioned explicitly where they are needed.

In the following, the \emph{lower and upper Hurst exponent} of $(X_t)_{t\in [0,\infty [}$ are defined by
\begin{align}\label{eq:HurstExp2}
 \lH&:=\inf\left\{s\in\R: \liminf_{\tau\to\infty} \frac{|c(\tau)|}{\tau^{2s-2}}=0\right\} \text{ and }\notag\\ \uH&:=\inf\left\{s\in\R: \limsup_{\tau\to\infty} \frac{|c(\tau)|}{\tau^{2s-2}}=0\right\},
\end{align}
respectively.
In general, the (upper) Hurst exponent provides an upper bound for the asymptotic behavior of the variance $\Var(X_\tau)$ and an assumption like the existence of the limit in \eqref{a5} implies that the Hurst exponent determines the asymptotic behavior of $\Var(X_\tau)$ completely. This is a consequence of the fact that $\Var(X_\tau)$ can be written in terms of the autocorrelation function $c$. For any $n\in\N$, one has
\begin{align}
\Var(X_{n}) &= \Cov\left(\sum_{i=0}^{n-1} \left(X_{i+1}-X_i\right),\sum_{j=0}^{n-1} \left(X_{j+1}-X_j\right)\right)\notag \\&= \sum_{i=0}^{n-1}\sum_{j=0}^{n-1} \Cov(X_{i+1}-X_{i},X_{j+1}-X_{j})\notag \\ \label{eq:var-ac-rel}
&= \Var(X_1)\left(n c(0)+2\sum_{k=1}^{n-1} (n-k)c(k)\right)\notag\\
&= \Var(X_1)\left(n +2\sum_{k=1}^{n-1} (n-k)c(k)\right).
\end{align}
Although the values of $\Var(X_\tau)$ are of interest for any $\tau \in \R$ with $\tau>0$, see e.g.\ \eqref{eq:Gaussian-formula}, the above relation \eqref{eq:var-ac-rel} for integer delays $n \in \N$ will be essential for proving Theorem\ref{thm:1}. 

The following observation will be useful in the sequel.
\begin{lemma}\label{lem:1}
Let $\beta\geq -1$, $\tau_0\in\N$ and $C>0$. Then there exists some $\tau_1>\tau_0$ such that, for any $n\geq \tau_1$,
\begin{equation}\label{eq:ineq-lem1}
C\cdot n < \sum_{k=\tau_0}^{n-1} (n-k) k^{\beta}.
\end{equation}
Moreover,
\begin{equation}\label{eq:ineq2-lem1}
\lim_{n\to\infty}\frac{\log\left(\sum_{k=\tau_0}^{n-1} (n-k)k^\beta\right)}{\log(n)}=\beta+2.
\end{equation}
\end{lemma}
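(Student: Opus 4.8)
The plan is to obtain two-sided control of the double sum $S(n):=\sum_{k=\tau_0}^{n-1}(n-k)k^\beta$, showing that it is comparable to $n^{\beta+2}$ (up to a logarithmic factor in the borderline case $\beta=-1$), and then to pass to logarithms. Both assertions \eqref{eq:ineq-lem1} and \eqref{eq:ineq2-lem1} will follow from this control, the first from the lower bound alone and the second from combining the lower and upper bounds.

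For the lower bound, which already yields \eqref{eq:ineq-lem1}, I would discard the tail of the sum and keep only the range $\tau_0\le k\le\lfloor n/2\rfloor$, on which $n-k\ge n/2$. This gives
\[ S(n)\ge \frac{n}{2}\sum_{k=\tau_0}^{\lfloor n/2\rfloor}k^\beta. \]
Since $\beta\ge -1$, i.e.\ $\beta+1\ge 0$, the partial sums $\sum_{k=\tau_0}^{m}k^\beta$ diverge as $m\to\infty$ (logarithmically if $\beta=-1$, polynomially if $\beta>-1$). Hence $\tfrac12\sum_{k=\tau_0}^{\lfloor n/2\rfloor}k^\beta$ eventually exceeds the prescribed constant $C$; choosing $\tau_1$ to be the first such threshold produces the strict inequality $Cn<S(n)$ for all $n\ge\tau_1$.

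For the asymptotic \eqref{eq:ineq2-lem1}, I would complement this with the trivial upper bound $S(n)\le n\sum_{k=\tau_0}^{n-1}k^\beta$, obtained from $n-k\le n$. Using the standard comparison with $\int x^\beta\,\mathrm{d}x$, namely $\sum_{k=\tau_0}^{m}k^\beta=\Theta(m^{\beta+1})$ for $\beta>-1$ and $\Theta(\log m)$ for $\beta=-1$, both the lower and upper bounds combine into
\[ c_1\,n^{\beta+2}\le S(n)\le c_2\,n^{\beta+2}(\log n)^{\varepsilon_\beta}, \]
with constants $c_1,c_2>0$ and with $\varepsilon_\beta=1$ if $\beta=-1$ and $\varepsilon_\beta=0$ otherwise. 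Taking logarithms, dividing by $\log n$, and letting $n\to\infty$, the additive constants and the $\log\log n$ correction both vanish, leaving exactly $\beta+2$.

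The only delicate point is the borderline case $\beta=-1$, where the inner sum is harmonic and contributes a logarithmic rather than a polynomial factor. Here one must verify that this extra $\log n$ is absorbed by the denominator $\log n$ and therefore does not shift the limit away from $\beta+2=1$; the sandwiching above makes this explicit. Everything else reduces to elementary monotonicity and the integral comparison for $\sum k^\beta$, and no results from earlier in the paper are required.
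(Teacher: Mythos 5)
Your proof is correct, but the key lower bound is obtained by a genuinely different decomposition from the one in the paper. The paper writes $\frac 1n\sum_{k=\tau_0}^{n-1}(n-k)k^\beta=\sum_{k=\tau_0}^{n-1}k^\beta-\frac 1n\sum_{k=\tau_0}^{n-1}k^{\beta+1}$ and bounds both sums by integrals; since the two terms are each of order $n^{\beta+1}$, one must check that the coefficient surviving the near-cancellation, namely $1/((\beta+1)(\beta+2))$, is positive, and the computation degenerates at $\beta=-1$, which the paper therefore handles as a separate case via the harmonic series. You instead restrict the sum to $\tau_0\le k\le\lfloor n/2\rfloor$, where $n-k\ge n/2$, so that $S(n)\ge\frac n2\sum_{k=\tau_0}^{\lfloor n/2\rfloor}k^\beta$; this sidesteps the cancellation entirely and treats $\beta=-1$ and $\beta>-1$ uniformly, since all that is needed for \eqref{eq:ineq-lem1} is the divergence of the partial sums $\sum_k k^\beta$. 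The upper bound $S(n)\le n\sum_{k=\tau_0}^{n-1}k^\beta$ and the passage to logarithms (where the extra $\log\log n$ from the harmonic case is killed by the $\log n$ in the denominator) are essentially the same in both arguments. Your truncation is the more robust and slightly more elementary route; the paper's exact decomposition has the minor advantage of identifying the sharp leading term $n^{\beta+2}/((\beta+1)(\beta+2))$, but that precision is not needed for the lemma or for its later applications.
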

\begin{proof}
To verify inequality \eqref{eq:ineq-lem1} for $n$ large enough, it is shown that $1/n$ times the right hand side tends to $+\infty$, as $n\to\infty$. For $\beta>-1$, it holds 
\begin{align*}
\frac 1n \sum_{k=\tau_0}^{n-1} (n-k) k^{\beta}
&=\sum_{k=\tau_0}^{n-1} k^{\beta}-\frac 1n \sum_{k=\tau_0}^{n-1}  k^{\beta+1}
\geq  \int_{\tau_0}^{n-1} k^{\beta} \diff k-\frac 1n \int_{\tau_0}^{n} k^{\beta+1} \diff k\\
&= \frac{(n-1)^{\beta+1}-\tau_0^{\beta+1}}{\beta+1}-\frac {n^{\beta+1}-n^{-1}\tau_0^{\beta+2}}{\beta+2}\\
&=\frac{(\beta+2)\left(\frac{n-1}n\right)^{\beta+1}-(\beta+1)}{(\beta+1)(\beta+2)} n^{\beta+1}-\frac{\tau_0^{\beta+1}}{\beta+1}+\frac 1n \frac{\tau_0^{\beta+2}}{\beta+2}. 
\end{align*}
The first term in this last expression tends to $+\infty$, as $n\to\infty$, while the other two summands stay bounded. This shows that the right hand side of \eqref{eq:ineq-lem1} grows faster than $n$ as $n\to\infty$ such that the inequality \eqref{eq:ineq-lem1} must be valid for $n$ large enough. For $\beta=-1$, it holds $\frac 1n \sum_{k=\tau_0}^{n-1} (n-k) k^{\beta}=\sum_{k=\tau_0}^{n-1} k^{-1}-1 +\frac {\tau_0}{n}\to+\infty$, as $n\to\infty$. Thus, inequality \eqref{eq:ineq-lem1} is also valid in this case for $n$ large enough. Recalling that the harmonic series grows like $\log(n)$, it is easily seen that $\lim_{n\to\infty}\log\left(\sum_{k=\tau_0}^{n-1} (n-k)k^\beta\right)/\log(n)=\lim_{n\to\infty}\log\left(n\log(n)\right)/{\log(n)}=1=\beta+2$ for $\beta=-1$ and thus \eqref{eq:ineq2-lem1} holds.

To see that \eqref{eq:ineq2-lem1} holds for $\beta>-1$ as well, multiply $n$ in the above estimate and observe that there is some constant $\tilde C>0$ such that, for all $n$ sufficiently large,
$$
\sum_{k=\tau_0}^{n-1} (n-k) k^{\beta}\geq \tilde C n^{\beta+2}.
$$
This clearly implies $\liminf_{n\to\infty}\log\left(\sum_{k=\tau_0}^{n-1} (n-k)k^\beta\right)/\log(n)\geq\beta+2$. For the opposite inequality observe that, for $\beta>-1$,
\begin{align*}
\frac 1n \sum_{k=\tau_0}^{n-1} (n-k) k^{\beta}
&\leq\sum_{k=\tau_0}^{n-1} k^{\beta}
\leq  \int_{\tau_0-1}^{n} k^{\beta} \diff k 
\leq (\beta+1)^{-1} n^{\beta+1},
\end{align*}
which implies $\limsup_{n\to\infty}\log\left(\sum_{k=\tau_0}^{n-1} (n-k)k^\beta\right)/\log(n)\leq\beta+2$. \qed
\end{proof}

A first consequence of Lemma~\ref{lem:1} is the following observation regarding the asymptotic behavior of the autocorrelation.
\begin{lemma}\label{lem:2}
Given $(X_t)_{t\in [0,\infty [}$, for any $s\geq\frac 12$ it holds
\begin{align*}
    \limsup_{\tau\to\infty}\frac{c(\tau)}{\tau^{2s-2}}\geq 0.
  \end{align*}
\end{lemma}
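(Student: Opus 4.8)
The plan is to argue by contradiction, exploiting the nonnegativity of $\Var(X_n)$ together with the representation \eqref{eq:var-ac-rel} of the variance in terms of the autocorrelation $c$. Fix $s\geq\frac12$ and suppose, contrary to the claim, that $\limsup_{\tau\to\infty} c(\tau)/\tau^{2s-2}<0$. Then there exist some $\varepsilon>0$ and some $\tau_0\in\N$ such that $c(\tau)\leq -\varepsilon\,\tau^{2s-2}$ for all integers $\tau\geq\tau_0$. Writing $\beta:=2s-2$, the hypothesis $s\geq\frac12$ is precisely the condition $\beta\geq -1$, so that Lemma~\ref{lem:1} becomes applicable to this exponent $\beta$.

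Next I would extract the basic constraint from nonnegativity of the variance. Since $\Var(X_1)>0$ by \eqref{a4} and $\Var(X_n)\geq 0$, relation \eqref{eq:var-ac-rel} forces $n+2\sum_{k=1}^{n-1}(n-k)c(k)\geq 0$, that is,
\[
\sum_{k=1}^{n-1}(n-k)c(k)\geq -\frac n2
\]
for every $n\in\N$. I would then split this sum at $\tau_0$. The head $\sum_{k=1}^{\tau_0-1}(n-k)c(k)$ involves only finitely many fixed terms and equals $an+b$ for constants $a,b$ depending on $\tau_0$ but not on $n$, hence is of order $n$. For the tail, the assumed bound $c(k)\leq-\varepsilon k^\beta$ together with $(n-k)\geq 0$ yields $\sum_{k=\tau_0}^{n-1}(n-k)c(k)\leq -\varepsilon\sum_{k=\tau_0}^{n-1}(n-k)k^\beta$.

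Combining these two estimates, I would obtain
\[
\varepsilon\sum_{k=\tau_0}^{n-1}(n-k)k^\beta\leq\left(a+\tfrac12\right)n+b ,
\]
so the weighted sum is bounded above by a quantity of order $n$. This is exactly where Lemma~\ref{lem:1}, and in particular inequality \eqref{eq:ineq-lem1}, delivers the contradiction: applying it with the exponent $\beta$ and a sufficiently large constant $C>0$ (for instance any $C>(a+\tfrac12)/\varepsilon$) shows that $\sum_{k=\tau_0}^{n-1}(n-k)k^\beta$ eventually exceeds $Cn$, whence $\varepsilon C n<(a+\tfrac12)n+b$; since $\varepsilon C-(a+\tfrac12)>0$, the left-hand side grows without bound, which is impossible for large $n$. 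This contradiction establishes $\limsup_{\tau\to\infty} c(\tau)/\tau^{2s-2}\geq 0$.

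The main obstacle, and the reason Lemma~\ref{lem:1} is indispensable, is the strictly superlinear growth of the weighted sum $\sum_{k=\tau_0}^{n-1}(n-k)k^\beta$. One must be certain that even in the borderline case $\beta=-1$ (that is, $s=\frac12$) this sum still grows faster than $n$ (there like $n\log n$), so that the contradiction survives at the endpoint of the admissible range of $s$. Everything else is routine bookkeeping with the finite head of the series and the sign of the variance.
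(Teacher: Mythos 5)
Your argument is correct and follows essentially the same route as the paper's proof: a contradiction derived from the nonnegativity of $\Var(X_n)$ via \eqref{eq:var-ac-rel}, splitting the sum at $\tau_0$ and invoking inequality \eqref{eq:ineq-lem1} of Lemma~\ref{lem:1} with $\beta=2s-2\geq -1$ to show the tail term eventually dominates the linear head. The only difference is cosmetic bookkeeping (you write the head as $an+b$ where the paper bounds it by a constant times $n$), so there is nothing to add.
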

  \begin{proof}
     Assume that the left-hand-side of the inequality above is strictly less than $0$ for some $s\geq\frac 12$. Then there exists some constant $C>0$ and some $\tau_0\in \N$ such that $$
     c(\tau)\leq -C \tau^{2s-2}
     $$
     holds for all $\tau\geq \tau_0$. From \eqref{eq:var-ac-rel} it is  inferred that, for any $n\geq \tau_0$,
     \begin{align*}
       0&\leq\frac{\Var(X_{n})}{\Var(X_{1})}=n+2\sum_{k=1}^{n-1} (n-k)c(k)\\
       &=n+2\cdot\left(\sum_{k=1}^{\tau_0-1} (n-k)c(k)+\sum_{k=\tau_0}^{n-1} (n-k)c(k)\right)\\
       &\leq \left(1+2\sum_{k=1}^{\tau_0-1} \lvert c(k)\rvert\right)\cdot n - 2C \sum_{k=\tau_0}^{n-1} (n-k) k^{2s-2}.
     \end{align*}
     Noting that the term in parentheses is a constant, Lemma~\ref{lem:1} implies that the second summand in this expression dominates the first one, provided $n$ is large enough. Hence for large $n$ the last expression is negative, a contradiction. \qed
  \end{proof}

Under the stated assumptions, Lemma~\ref{lem:2} shows in particular that, if the limit in \eqref{a5} is assumed to exist (for some $H\geq 0.5$), then it cannot be negative. The next statement clarifies that assuming the existence of the limit \eqref{a5} for some $H\in\R$ implies that $H$ is the Hurst exponent of the process.
\begin{lemma} \label{lem:3}
Assume for $(X_t)_{t\in [0,\infty [}$ that the limit in \eqref{a5} exists and equals $C$ for some constants $H\in\R$ and $C\neq 0$. Then
for any $s>H$, $\lim_{\tau\to\infty} \frac{c(\tau)}{\tau^{2s-2}}=0$, and,
$\lim_{\tau\to\infty} \frac{c(\tau)}{\tau^{2s-2}}\in\{+\infty ,-\infty\}$, for any $s<H$, where the sign corresponds to the sign of $C$.
In particular, the Hurst exponent of $(X_t)_{t\in [0,\infty [}$ (as defined in \eqref{eq:HurstExp}) exists and equals $H$. Consequently, the constant $C$ is positive.
\end{lemma}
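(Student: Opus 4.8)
The whole statement reduces to manipulating the single hypothesis that $c(\tau)/\tau^{2H-2}\to C\neq 0$ from \eqref{a5}, and the plan is to extract each conclusion by factoring out appropriate powers of $\tau$. First I would treat the two limit assertions for $s\neq H$ simultaneously by writing
\[
\frac{c(\tau)}{\tau^{2s-2}}=\frac{c(\tau)}{\tau^{2H-2}}\cdot\tau^{2(H-s)}.
\]
Here the first factor converges to $C$ by hypothesis, while the second factor tends to $0$ when $s>H$ and to $+\infty$ when $s<H$. Consequently the product tends to $0$ for every $s>H$; and for every $s<H$, since the first factor is eventually bounded away from $0$ with the sign of $C$, the product diverges to $+\infty$ or $-\infty$ according to the sign of $C$. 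This yields the first two assertions for arbitrary real $H$.

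Next I would identify $H$ with the Hurst exponent defined in \eqref{eq:HurstExp}. The hypothesis gives $|c(\tau)|/\tau^{2H-2}\to|C|>0$, so in particular $c(\tau)\neq 0$ for all sufficiently large $\tau$ and $\log|c(\tau)|$ is well defined there. Taking logarithms of this convergence produces
\[
\log|c(\tau)|-(2H-2)\log\tau\longrightarrow\log|C|,
\]
a bounded quantity, so after dividing by $\log\tau\to\infty$ one obtains $\log|c(\tau)|/\log\tau\to 2H-2$. Substituting into \eqref{eq:HurstExp} gives $1+\tfrac12(2H-2)=H$, so the Hurst exponent exists and equals $H$.

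It remains to establish that $C>0$, and for this I would invoke Lemma~\ref{lem:2}: applied with $s=H$ it gives $\limsup_{\tau\to\infty}c(\tau)/\tau^{2H-2}\geq 0$, and since the full limit exists and equals $C$ this forces $C\geq 0$, whence $C>0$ because $C\neq 0$. This last step is the one requiring genuine care rather than routine algebra: the factorization arguments hold for any real $H$, but the positivity of $C$ is not a formal consequence of the limit alone. Indeed Lemma~\ref{lem:2} only applies for $s\geq\tfrac12$, so the conclusion $C>0$ relies on the standing assumption $H\geq\tfrac12$; without it the sign of $C$ can be negative, as the fractional Brownian motion value $C=H(2H-1)$ shows for $H<\tfrac12$. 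I would therefore make explicit in the write-up that the positivity assertion is contingent on $H\geq\tfrac12$.
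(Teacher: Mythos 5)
Your proof is correct and follows essentially the same route as the paper: the same factorization $\frac{c(\tau)}{\tau^{2s-2}}=\frac{c(\tau)}{\tau^{2H-2}}\cdot\tau^{2(H-s)}$ for the two limit assertions, and a take-logarithms-of-the-convergence argument (equivalent to the paper's $\delta$-sandwich $(|C|-\delta)\tau^{2H-2}\leq|c(\tau)|\leq(|C|+\delta)\tau^{2H-2}$) to identify $H$ with the Hurst exponent of \eqref{eq:HurstExp}. Your caveat about the last claim is well taken: the paper's own proof of Lemma~\ref{lem:3} never actually argues that $C>0$, and as you observe this conclusion genuinely requires Lemma~\ref{lem:2} and hence $H\geq\tfrac12$ --- a hypothesis missing from the lemma's statement (the fBm value $C=H(2H-1)<0$ for $H<\tfrac12$ is a valid counterexample) but present wherever the lemma is invoked, namely in Theorem~\ref{thm:1}.
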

\begin{proof}
  Observe that, for any $s\in\R$,
  $$ \frac{c(\tau)}{\tau^{2s-2}}=\frac{c(\tau)}{\tau^{2H-2}}\cdot \tau^{2(H-s)}.$$
  Now the assumptions imply that the first factor on the right converges (to the constant $C\neq 0$), as $\tau\to\infty$, while the second factor tends to zero for $s>H$ and to $+\infty$ for $s<H$. From this the first assertion is obvious. It is an immediate consequence, that the number $H$ is the common value of the lower and upper Hurst exponent of $(X_t)_{t\in [0,\infty [}$ as defined in \eqref{eq:HurstExp2}, i.e. $H=\lH=\uH$. Moreover, the Hurst exponent in the sense of \eqref{eq:HurstExp} also exists and equals $H$. (The existence of the limit in \eqref{a5} implies, that for any $\delta$ such that $0<\delta<C$ there is some $\tau_0$ such that $(|C|-\delta)\cdot \tau^{2H-2}\leq |c(\tau)|\leq (|C|+\delta)\cdot \tau^{2H-2}$ for all $\tau\geq \tau_0$. Taking logarithms in this inequality and dividing by $\log(\tau)$, yields
  \begin{align*}
    2H-2+\frac{\log(|C|-\delta)}{\log(\tau)}\leq \frac{\log(|c(\tau)|)}{\log(\tau)}\leq 2H-2+\frac{\log(|C|+\delta)|)}{\log(\tau)}.
  \end{align*}
  Letting $\tau\to\infty$, the expressions on the left and on the right converge to $2H-2$ from which the existence of the limit in \eqref{eq:HurstExp} is clear.) \qed
\end{proof}

The next statement gives a general upper bound for the asymptotics of the variance $\Var(X_\tau)$ as $\tau\to\infty$ of a process $(X_t)_{t\in [0,\infty [}$ in terms of its upper Hurst exponent.

\begin{proposition} \label{prop:1} 
If $H$ is the upper Hurst exponent of $(X_t)_{t\in [0,\infty [}$ (as defined in \eqref{eq:HurstExp2}), then
    \begin{align*}
  \limsup_{n\to \infty} \frac{\log(\Var(X_{n}))}{\log(n)}\leq \max\{1,2H\}.
\end{align*}
\end{proposition}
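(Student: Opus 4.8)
The plan is to bound $\Var(X_n)$ for integer $n$ directly via the variance--autocorrelation relation \eqref{eq:var-ac-rel}, controlling the autocorrelation sum through the definition of the upper Hurst exponent together with the growth estimate from Lemma~\ref{lem:1}.

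First I would extract a usable consequence of the definition \eqref{eq:HurstExp2}. The set $\{s\in\R:\limsup_{\tau\to\infty}|c(\tau)|/\tau^{2s-2}=0\}$ is upward closed, because enlarging $s$ only makes the denominator grow faster; hence for every $s>H=\uH$ one has $\limsup_{\tau\to\infty}|c(\tau)|/\tau^{2s-2}=0$. As the quotient is nonnegative, this forces $|c(\tau)|/\tau^{2s-2}\to 0$, so there exists $\tau_0\in\N$ (depending on $s$) with $|c(k)|\leq k^{2s-2}$ for all $k\geq\tau_0$.

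Next, fix any $s>\max\{H,\tfrac12\}$; the extra requirement $s>\tfrac12$ guarantees $\beta:=2s-2>-1$, so that Lemma~\ref{lem:1} applies. Using the triangle inequality in \eqref{eq:var-ac-rel} and splitting the sum at $\tau_0$, I would bound
\[
\frac{\Var(X_n)}{\Var(X_1)} \leq n + 2\sum_{k=1}^{\tau_0-1}(n-k)\,|c(k)| + 2\sum_{k=\tau_0}^{n-1}(n-k)\,k^{2s-2}.
\]
The middle term is at most $n\sum_{k=1}^{\tau_0-1}|c(k)|$, hence of order $n$, while the estimate $\sum_{k=\tau_0}^{n-1}(n-k)k^{\beta}\leq(\beta+1)^{-1}n^{\beta+2}$ from the proof of Lemma~\ref{lem:1} bounds the last term by a constant times $n^{2s}$. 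Since $2s>1$, the $n^{2s}$ term dominates, and absorbing $\Var(X_1)$ and all constants yields $\Var(X_n)\leq C\,n^{2s}$ for all sufficiently large $n$.

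Finally I would take logarithms, divide by $\log n$, and pass to the limit superior to obtain $\limsup_{n\to\infty}\log(\Var(X_n))/\log(n)\leq 2s$. Since this holds for every $s>\max\{H,\tfrac12\}$, letting $s$ decrease to $\max\{H,\tfrac12\}$ gives the claim, as $2\max\{H,\tfrac12\}=\max\{1,2H\}$. The main obstacle, and the only genuinely delicate point, is the restriction $s>\tfrac12$ required to invoke Lemma~\ref{lem:1}: it prevents pushing the exponent below $1$ and is precisely what produces the bound $\max\{1,2H\}$ rather than simply $2H$.
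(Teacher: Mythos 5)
Your argument is correct and shares the paper's skeleton --- split the sum in \eqref{eq:var-ac-rel} at a threshold $\tau_0$ beyond which the definition \eqref{eq:HurstExp2} of the upper Hurst exponent yields $\lvert c(k)\rvert\leq k^{2s-2}$, then estimate the remaining power sum --- but your finishing is genuinely simpler. The paper splits into the cases $H\geq 0.5$ and $H<0.5$, treats them by different estimates, and in the first case extracts the limit superior via the inequality $\log(a+b)\leq b/a+\log(a)$ combined with both conclusions \eqref{eq:ineq-lem1} and \eqref{eq:ineq2-lem1} of Lemma~\ref{lem:1}. You instead fix an arbitrary $s>\max\{H,\tfrac12\}$, use only the crude bound $\sum_{k=\tau_0}^{n-1}(n-k)k^{\beta}\leq(\beta+1)^{-1}n^{\beta+2}$ for $\beta=2s-2>-1$ (this appears inside the proof of Lemma~\ref{lem:1} but is not part of its statement, so you should record it as a separate observation), conclude $\Var(X_n)\leq Cn^{2s}$ for large $n$, take logarithms, and let $s$ decrease to $\max\{H,\tfrac12\}$. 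This unifies the two cases, eliminates the $\log(a+b)$ device, and replaces the asymptotic statement \eqref{eq:ineq2-lem1} by a one-line upper bound; the only thing you forgo is the two-sided asymptotic information, which is not needed for this upper-bound proposition (the paper needs it later, in Proposition~\ref{prop:2}, for the matching lower bound). Your preliminary observation that the defining set in \eqref{eq:HurstExp2} is upward closed, so that the limit superior vanishes for \emph{every} $s>\uH$ and not merely for some, is a point the paper glosses over and is worth keeping explicit.
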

\begin{proof}
  Observe that due to the relation \eqref{eq:var-ac-rel}, it is enough to show that
  \begin{equation}\label{eq:2H-upperbd}
\limsup_{n\to \infty}\frac{\log \left(n +2\sum_{k=1}^{n-1} (n-k)c(k)\right)}{\log (n)} \leq \max\{1,2H\}.
\end{equation}
Let $\eps>0$. The definition of the upper Hurst exponent \eqref{eq:HurstExp2} implies there exists some $\tau_0 \in \mathbb N$ such that
\begin{equation*}
\lvert c(\tau)\rvert \leq \tau^{2H-2+\epsilon}
\end{equation*}
holds for all $\tau \geq \tau_0$. It is inferred that, for $n>\tau_0$,
\begin{align*}
n +2\sum_{k=1}^{n-1} (n-k)c(k)&\leq n +2\sum_{k=1}^{\tau_0-1} (n-k)\lvert c(k) \rvert + 2\sum_{k=\tau_0}^{n-1} (n-k)\lvert c(k) \rvert\\
&\leq n\left(1 +2\sum_{k=1}^{\tau_0-1} \lvert c(k) \rvert\right) + 2\sum_{k=\tau_0}^{n-1} (n-k) k^{2H-2+\eps},
\end{align*}
where the expression in parentheses is a constant independent of $n$. Let us denote it by $E$.
Applying the formula $\log(a+b) \leq b/a + \log(a)$ (which is valid for $a,a+b>0$), with $a:=2\sum_{k=\tau_0}^{n-1} (n-k) k^{2H-2+\eps}$ and $b:=n\cdot E$ it is inferred that
\begin{align*}
&\limsup_{n\to \infty} \frac{\log \left(n +2\sum_{k=1}^{n-1} (n-k)c(k)\right)}{\log (n)}\\
&\quad\leq\limsup_{n\to \infty} \frac{\log \left(n\cdot E + 2\sum_{k=\tau_0}^{n-1} (n-k)k^{2H-2+\eps}\right)}{\log (n)}\\
&\quad\leq \limsup_{n\to \infty} \left(
\frac{n\cdot E}{\log (n)\cdot 2\sum_{k=\tau_0}^{n-1} (n-k)k^{2H-2+\eps}}\right.\\
&\qquad\qquad\qquad\qquad\left.+ \frac{\log \left(2\sum_{k=\tau_0}^{n-1}(n-k)k^{2H-2+\eps}\right)}{\log (n)}
\right).
\end{align*}
Now observe that for $H\geq 0.5$, by the inequality 
in Lemma~\ref{lem:1}, the first quotient vanishes as $n\to\infty$, and by \eqref{eq:ineq2-lem1} in Lemma~\ref{lem:1}, the second quotient converges to $2H+\eps$.
Since this estimates holds for any $\eps>0$, the assertion follows for any $H\geq 0.5$ by letting $\eps\searrow 0$.

For $H<0.5$, the summand $a$ from above can be bounded as follows
\begin{align*} 
a
\leq 2n^{1+\eps}\sum_{k=\tau_0}^{n-1} k^{2H-2}
\leq2{n^{1+\eps}}\int_{\tau_0-1}^{n} k^{2H-2} \diff k
\leq {n^{1+\eps}}\frac{2(\tau_0-1)^{2H-1}}{1-2H}. 
\end{align*}
Since this holds for any $\eps>0$ (and since the summand $b$ grows like $n$, as $n\to\infty$), one concludes that $1$ is an upper bound for the limes superior in \eqref{eq:2H-upperbd}, as asserted for $H<0.5$. This completes the proof. \qed 
\end{proof}

\begin{proposition} \label{prop:2} Let $H\geq 0.5$ and $C>0$. 
    Assume that for $(X_t)_{t\in [0,\infty [}$ the limit in \eqref{a5} exists and equals $C$. Then $H$ is the Hurst exponent of $(X_t)_{t\in [0,\infty [}$ (as defined in \eqref{eq:HurstExp}) and
    \begin{align*}
  \lim_{n\to \infty} \frac{\log(\Var(X_{n}))}{\log(n)}= 2H.
\end{align*}
\end{proposition}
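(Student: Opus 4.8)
The plan is to sandwich the quantity $\log(\Var(X_n))/\log(n)$ between matching upper and lower bounds, both equal to $2H$. The assertion that $H$ is the Hurst exponent in the sense of \eqref{eq:HurstExp} is immediate: since by hypothesis the limit in \eqref{a5} exists and equals $C\neq 0$, this is precisely the content of Lemma~\ref{lem:3}, which moreover identifies $H$ with the common value $\lH=\uH$ of the lower and upper Hurst exponents. This identification is what makes Proposition~\ref{prop:1} applicable, so I would obtain the upper bound directly from it: with upper Hurst exponent $\uH=H$, Proposition~\ref{prop:1} gives $\limsup_{n\to\infty}\log(\Var(X_n))/\log(n)\leq\max\{1,2H\}$, and since $H\geq 0.5$ forces $2H\geq 1$, the maximum equals $2H$.

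The matching lower bound is the heart of the argument. Because $c(\tau)/\tau^{2H-2}\to C>0$, I can extract constants $C'>0$ and $\tau_0\in\N$ with $c(\tau)\geq C'\tau^{2H-2}$ for all $\tau\geq\tau_0$. Inserting this into the variance--autocorrelation identity \eqref{eq:var-ac-rel} and splitting the sum at $\tau_0$, the initial block $n+2\sum_{k=1}^{\tau_0-1}(n-k)c(k)$ is bounded below by $-nE$ with $E=1+2\sum_{k=1}^{\tau_0-1}|c(k)|$, while the tail is bounded below by $2C'\sum_{k=\tau_0}^{n-1}(n-k)k^{2H-2}$. I then apply Lemma~\ref{lem:1} with $\beta=2H-2\geq -1$: inequality \eqref{eq:ineq-lem1} shows that the tail sum exceeds any fixed multiple of $n$ for large $n$, so it dominates the linear term $-nE$ and yields $\Var(X_n)/\Var(X_1)\geq C'\sum_{k=\tau_0}^{n-1}(n-k)k^{2H-2}$ for $n$ large enough. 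Taking logarithms, dividing by $\log n$, and using the asymptotic \eqref{eq:ineq2-lem1} (which supplies the growth exponent $\beta+2=2H$) then gives $\liminf_{n\to\infty}\log(\Var(X_n))/\log(n)\geq 2H$, the additive constant $\log\Var(X_1)$ being harmless after division by $\log n$.

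I expect the main obstacle to lie in this lower bound, specifically in controlling the sign of the first $\tau_0-1$ autocorrelation terms, which may be negative, and in demonstrating that the positive tail overwhelms them. Lemma~\ref{lem:1} is exactly tailored for this purpose: its growth inequality \eqref{eq:ineq-lem1} handles the domination of the tail over the linear contribution, while its logarithmic asymptotic \eqref{eq:ineq2-lem1} pins down the exponent, with the borderline case $H=0.5$ covered by the $\beta=-1$ branch of the lemma. Combining the upper and lower bounds yields $\lim_{n\to\infty}\log(\Var(X_n))/\log(n)=2H$, completing the proof.
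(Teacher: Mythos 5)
Your proof is correct and follows essentially the same route as the paper: Lemma~\ref{lem:3} to identify $H$ as the Hurst exponent, Proposition~\ref{prop:1} (with $\max\{1,2H\}=2H$ since $H\geq 0.5$) for the upper bound, and the split of the sum in \eqref{eq:var-ac-rel} at $\tau_0$ combined with Lemma~\ref{lem:1} for the matching lower bound. The only cosmetic difference is that you absorb the negative linear term $-nE$ into half of the dominant tail sum before taking logarithms, whereas the paper keeps it and instead invokes the inequality $\log(a+b)\geq b/a-(b/a)^2/2+\log(a)$; both devices rest on the same growth inequality \eqref{eq:ineq-lem1}.
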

\begin{proof} By Lemma~\ref{lem:3}, $H$ is indeed the Hurst exponent of $(X_t)_{t\in [0,\infty [}$.
In view of Proposition~\ref{prop:1} and the relation \eqref{eq:var-ac-rel}, the only thing that remains to be shown is that
\begin{equation*}
L:=\liminf_{n\to \infty}\frac{\log\left(n +2\sum_{k=1}^{n-1} (n-k)c(k)\right)}{\log(n)} \geq 2H.
\end{equation*}
Since the limit in \eqref{a5} is assumed to equal $C>0$, 
there exists $\tau_0 \in \mathbb N$ such that
\begin{equation*}
\frac C2\tau^{2H-2} \leq c(\tau) 
\end{equation*}
holds for all $\tau \geq \tau_0$. In particular, $c(\tau)$ will be positive for $\tau>\tau_0$.
This relation implies that, for any $n\in\N$,
\begin{align*}
n +2\sum_{k=1}^{n-1} (n-k)c(k)&=n +2\sum_{k=1}^{\tau_0-1} (n-k)c(k) +2\sum_{k=\tau_0}^{n-1} (n-k)c(k)\\ 
&\geq
-\left \lvert n +2\sum_{k=1}^{\tau_0-1} (n-k)c(k) \right \rvert+ C\sum_{k=\tau_0}^{n-1} (n-k)k^{2H-2}\\ 
&\geq
-n\cdot \left(1 +2\sum_{k=1}^{\tau_0-1} \lvert c(k) \rvert\right)+ C\sum_{k=\tau_0}^{n-1} (n-k)k^{2H-2}.
\end{align*}
Now Lemma~\ref{lem:1} ensures that there exists a $\tau_1$ such that the last expression is positive for all $n\geq \tau_1$.
Employing the formula $\log(a+b) \geq b/a -(b/a)^2/2+ \log(a)$, which is valid for $a,a+b>0$, allows to conclude that
\begin{align*}
L&\geq\liminf_{n\to \infty} \frac{\log \left(C\sum_{k=\tau_0}^{n-1} (n-k)k^{2H-2}-E n\right)}{\log (n)}\\
&\geq \liminf_{n\to \infty} \frac 1{\log(n)}\Biggl(
\frac{-E \cdot n}{C\sum_{k=\tau_0}^{n-1} (n-k)  k^{2H-2}}
- \frac{E^2 \cdot n^2}{2\left(C\sum_{k=\tau_0}^{n-1} (n-k)  k^{2H-2}\right)^2}\\
&\hspace{3cm}+ \log \left(C\sum_{k=\tau_0}^{n-1} (n-k)\cdot k^{2H-2}\right)\Biggr). 
\end{align*}
By the inequality in Lemma~\ref{lem:1}, the first two summands are bounded and therefore, when divided by $\log(n)$, they vanish as $n\to\infty$. Furthermore, by \eqref{eq:ineq2-lem1}, the third summand divided by $\log(n)$ converges to $2H$.  Notice that for the application of Lemma~\ref{lem:1} the assumption $H\geq 0.5$ was used. \qed
\end{proof}

After these preparations a proof of Theorem~\ref{thm:1} can be provided.
\begin{proof}[of Theorem~\ref{thm:1}] 
Under the assumptions of the theorem, Proposition~\ref{prop:2} applies. Thus, $\lim_{n \to \infty}\frac{\log_2(\Var(X_{n}))}{\log_2(n)}$ exists and equals $2H$. One can compute this limit by passing to any subsequence $(\tau_n)_{n\in\N} \subseteq \N$ diverging to $\infty$. Choose $\tau_n=2^n$. Then
\begin{align*}
H &= \frac{1}{2}\lim_{n\to \infty} \frac{\log_2(\Var(X_{2^n}))}{\log_2(2^n)}\\
&= \frac{1}{2}\lim_{n\to \infty} \frac{1}{n} \log_2(\Var(X_{2^{n}}))\notag\\
&= \frac{1}{2}\lim_{n\to \infty} \frac{1}{n} \left[ \log_2(\Var(X_{2^{n}})) -\log_2(\Var(X_{1})) \right]\notag\\
&= \frac{1}{2}\lim_{n\to \infty} \frac{1}{n}\sum_{i=0}^{n-1} \log_2\left(\frac{\Var(X_{2^{i+1}})}{\Var(X_{2^{i}})}\right).\notag\\
\end{align*}
Together with Proposition~\ref{propchar} and the Stolz-Ces\'aro theorem, this implies
\begin{align*}
&\liminf_{\tau \to \infty} h(p(\tau)) \leq \liminf_{n\to \infty} h(p(2^n)) = \frac{1}{2}\liminf_{n\to \infty} \log_2\left(\frac{\Var(X_{2^{n+1}})}{\Var(X_{2^{n}})}\right)\\
&\leq \frac{1}{2}\liminf_{n\to \infty} \frac{1}{n}\sum_{i=0}^{n-1} \log_2\left(\frac{\Var(X_{2^{i+1}})}{\Var(X_{2^{i}})}\right) = H\\
& = \frac{1}{2}\limsup_{n\to \infty} \frac{1}{n}\sum_{i=0}^{n-1} \log_2\left(\frac{\Var(X_{2^{i+1}})}{\Var(X_{2^{i}})}\right)\\
& \leq \frac{1}{2}\limsup_{n\to \infty} \log_2\left(\frac{\Var(X_{2^{n+1}})}{\Var(X_{2^{n}})}\right)\\
&= \limsup_{n\to \infty} h(p(2^n)) \leq \limsup_{\tau \to \infty} h(p(\tau)).
\end{align*}
So in particular, if the limit $\lim_{\tau \to \infty} h(p(\tau))$ exists, it must be equal to $H$, which completes the proof of the theorem.\qed
%
\end{proof}

Recall that it was assumed in Theorem~\ref{thm:1} that the limit $\lim_{\tau \to \infty} h(p(\tau))$ exists in order to conclude that it must be equal to $H$.  The last sequence of inequalities in the above proof shows that, alternatively, one can impose the weaker assumption that $\lim_{n \to \infty} h(p(2^n))$ exists in order to prove the (slightly weaker) statement  that $\lim_{n \to \infty} h(p(2^n))=H$ holds.

\end{document}